\definecolor{darkgreen}{rgb}{0,0.35,0}
\newtcolorbox{mybox}[2][]{width=10cm,colback = red!5!white, colframe = green!75!black, fonttitle = \bfseries,colbacktitle = red!55!yellow, enhanced,attach boxed title to top left={yshift=-2mm},	title=#2,#1}
\theoremstyle{plain} 
\newtheorem{thm}{{Theorem}\hspace{0.05pt}}[section]
\newtheorem{prop}[thm]{{Proposition}\hspace{0.05pt}}
\newtheorem{cor}[thm]{Corollary\hspace{0.05pt}}
\newtheorem{lem}[thm]{{Lemma}\hspace{0.05pt}}
\theoremstyle{definition}
\newtheorem{dfn}[thm]{{Definition}\hspace{0.05pt}}
\newtheorem{cons}[thm]{{Construction}\hspace{0.05pt}}
\newtheorem{dfn/prop}[thm]{{Definition/Proposition}\hspace{0.05pt}}
\newtheorem{rem}[thm]{{Remark}\hspace{0.05pt}}
\newtheorem*{ntt}{{Notation}\hspace{0.05pt}}
\newtheorem*{ack}{{Acknowledgments}\hspace{0.05pt}}
\newcommand{\K}{\mathrm{K}}
\newcommand{\A}{\mathscr{A}}
\newcommand{\B}{\mathscr{B}}
\newcommand{\C}{\mathscr{C}}
\newcommand{\D}{\mathscr{D}}
\newcommand{\E}{\mathscr{E}}
\newcommand{\ProFin}{\mathrm{ProFin}}
\newcommand{\An}{\mathrm{An}}
\newcommand{\Sp}{\mathrm{Sp}}
\newcommand{\Fun}{\mathrm{Fun}}
\newcommand{\Shv}{\mathrm{Shv}}
\newcommand{\op}{\mathrm{op}}
\newcommand{\Hom}{\mathrm{Hom}}
\newcommand{\fib}{\mathrm{fib}}
\newcommand{\cont}{\mathrm{cont}}
\newcommand{\dual}{\mathrm{dual}}
\newcommand{\CondAn}{\mathrm{CondAn}}
\newcommand{\CAlg}{\mathrm{CAlg}}
\newcommand{\light}{\mathrm{light}}
\newcommand{\Corr}{\mathrm{Corr}}
\newcommand{\Cat}{\mathrm{Cat}}
\begin{document}
	%\tiny
	%\scriptsize
	%\footnotesize
	\small 
	%\normalsize
	
	\title{The universal continuous six functor formalism on light condensed anima}
	\date{\today}
	
	\author{Li He}
	\address{Graduate School of Mathematics, Nagoya University, 464-8602}
	\email{m21015y@math.nagoya-u.ac.jp}
	
	\maketitle
	\begin{abstract}
		After the universal property of the six functor formalism $\Shv(-;\Sp)$ on locally compact Hausdorff spaces given by Zhu,  we show that the six functor formalism $\Shv(-;\Sp)$
		on light condensed anima in the sense of Heyer-Mann is initial among all six functor formalisms $D$ satisfying 
		some mild conditions, and then we present some applications.
	\end{abstract}
	\tableofcontents
	
	\section{Introduction}
	
In mathematics, the six functor formalisms are fundamental to understand topology and geometry.
Recent years, people started to study the six functor formalisms from higher categorical perspective.
For example, Liu-Zheng and Mann (\cite[Appendix A.5]{Man22}) gave a very concise definition of six functor formalisms.

Heyer-Mann in \cite{HM24} says that a \textit{geometric setup}
is a pair $(\C,E)$, where $\C$ is a category has all finite limits, and $E$ is a family of morphisms in $\C$,
which is closed under composition, base change,
and contains all isomorphisms, and is closed under passing to diagonals.

Given a geometric setup $(\C,E)$,
there is a \textit{category of correspondences}
$\mathrm{Corr}(\C,E)$ associated with $(\C,E)$, which can be described as follows:
\begin{itemize}
	\item[(a)] objects in $\mathrm{Corr}(\C,E)$
	are objects in $\C$.
	\item [(b)] morphisms from $Y$ to $X$ are correspondences
	$Y \stackrel{f}{\leftarrow} 
	Z \stackrel{g}{\rightarrow} X $,
	where $g$ lies in $E$.
\end{itemize}
One can promote the category $\mathrm{Corr}(\C,E)$
to a symmetric monoidal category
$\mathrm{Corr}(\C,E)^\otimes$ whose underlying category is
$\mathrm{Corr}(\C,E)$. See \cite[Section 2.3]{HM24}.

Liu-Zheng and Mann says that a \textit{3-functor formalism}
is a lax symmetric monoidal functor
$$D:\mathrm{Corr}(\C,E)^\otimes\to {\Pr}^{L,\otimes},$$
where the symmetric monoidal structure on $\Pr^{L,\otimes}$ is given by the Lurie's tensor product.

Now, given a 3-functor formalism
$D:\mathrm{Corr}(\C,E)^\otimes\to {\Pr}^{L,\otimes},$
we are able to obtain the following functors:
\begin{itemize}
	\item [(1)] For any map $f:Y\to X$,
	the correspondence $Y \stackrel{f}{\leftarrow} X= X $
	gives the pullback functor
	$f^*:D(X)\to D(Y)$.
	\item [(2)] For any map $g:Y\to X$ in $E$,
	the correspondence $Y=Y \stackrel{g}{\rightarrow} X$
	gives the functor $g_!:D(Y)\to D(X)$.
	\item [(3)] The tensor product functor
	$-\otimes-:D(X)\otimes D(X)\to D(X)$
	is given by the composition
	$ D(X)\otimes D(X)\to D(X\times X)\to D(X)$,
	where the first functor is given by the lax symmetric monoidal functor $D$, and the second functor is $\Delta^*$,
	where $\Delta:X\to X\times X$ is the diagonal.
\end{itemize}

Then one can make the following definition:

\begin{dfn}
	A \textit{6-functor formalism}
	is a 3-functor formalism
	$D:\mathrm{Corr}(\C,E)^\otimes\to {\Pr}^{L,\otimes}$,
	such that the functors
	$-\otimes A$, $f^*$ and $f_!$ admit right adjoints.
\end{dfn}

A fundamental example of six functor formalisms is the six functor formalism on locally compact Hausdorff spaces.
Given a locally compact Hausdorff space $X$,
we can construnction
the category $\Shv(X;\Sp)$ of $\Sp$-valued sheaves on $X$.
And we know that the assignment $X\mapsto \Shv(X;\Sp)$
promotes to a six functor formalism
$$\Shv(-;\Sp):
\mathrm{Corr}(\mathrm{LCH},\mathrm{all})^\otimes\to {\Pr}^{L,\otimes},$$
where $\mathrm{LCH}$ is the category of locally compact Hausdorff spaces. See \cite[7.4]{Sch25}.

In his work \cite{Zhu25},
Qingchong Zhu showed that
the six functor formalism
$\Shv(-;\Sp):
\mathrm{Corr}(\mathrm{LCH})^\otimes\to {\Pr}^{L,\otimes}$
is initial among all \textit{continuous} six functor formalisms
(See \cite[4.18]{Zhu25}). 
Here, a six functor formalism
$D:\mathrm{Corr}(\mathrm{LCH})^\otimes\to {\Pr}^{L,\otimes}$
is called continuous, if it satisfies the following conditions:
\begin{itemize}
	\item [(1)] For any locally compact Hausdorff space $X$, the category $D(X)$ is dualizable.
	\item [(2)] $D$ satisfies canonical descent.
	That is, for any locally compact Hausdorff space $X$,
	the assignment 
	$\mathrm{Open}(X)^\op\to {\Pr}^L; U \mapsto D(U) $
	is a sheaf.
	\item [(3)] $D$ sasisfies profinite descent.
	That is, for a cofiltered limit $X\simeq \lim_i X_i$ of compact Hausdorff spaces $X_i$, the induced functor
	$D(X)\to \lim_i D(X_i) $
	is an equivalence.
	\item [(4)] For any hypercomplete locally compact Hausdorff space $X$, the family
	$\{x^*:D(X)\to D(*) \}_{x\in X}$
	is jointly conservative.
\end{itemize}

In other words, if $D$ is a six functor formalism on locally compact Hausdorff spaces satisfying conditions (1)-(4),
then there exists uniquely a morphism
$$\alpha:\Shv(-;\Sp)\to D(-)$$
 between six functor formalisms.

Given a six functor formlism $D$ on a geometric setup $(\C,E)$, Heyer-Mann gave an extension of $D$
so that we get a six functor formalism
on $(\mathrm{HypShv}(\C), E^\prime)$, where $E^\prime$ is a family in $\mathrm{HypShv}(\C)$. See \cite[Section 3.4]{HM24}.
In particular, one can extend the six functor formalism
$$\Shv(-;\Sp):\mathrm{Corr}(\ProFin^\mathrm{light})\to
{\Pr}^L$$
 on light profinite sets to the six functor formalism
$$\Shv(-;\Sp):\mathrm{Corr}(\CondAn^\mathrm{light},E)\to
{\Pr}^L$$
 on light condensed anima.

Now, we can state our main result:

	\begin{thm} \label{main result}
		The six functor formalism
		$$\Shv(-;\Sp):\mathrm{Corr}(\CondAn^\mathrm{light},E)\to
		{\Pr}^L_{\kappa}$$
		is initial among all six functor formalisms
		$D:\mathrm{Corr}(\CondAn^\mathrm{light},E)\to
		{\Pr}^L_{\kappa}$ satisfying the following conditions:
		\begin{itemize}
			\item [(a)]
			For any light profinite set $S$,
			the $\infty$-category $D(S)$ is dualizable,
			and the family $\{s^*:D(S)\to D(*)\}_{s\in S}$
			is jointly conservative.
			\item [(b)]
			The functor $D:\CondAn^{\mathrm{light},\op}\to {\Pr}^L_{\kappa}$
			preserves limits.
		\end{itemize}
		We denote the unique morphism
		by $\alpha:\Shv(-;\Sp)\to D(-)$.
	\end{thm}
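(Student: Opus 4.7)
My plan is to reduce the theorem to its analogue on light profinite sets via the Heyer--Mann extension procedure, and to establish the profinite statement by bootstrapping from the point.

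For the reduction, \cite[Section 3.4]{HM24} produces $\Shv(-;\Sp)$ on $\CondAn^{\light}$ as the canonical extension of its restriction to $\ProFin^{\light}$, and the same extension is available for any six functor formalism on $\ProFin^{\light}$. Condition (b), the limit-preservation of $D: \CondAn^{\light,\op} \to \Pr^L_\kappa$, is exactly the descent hypothesis that identifies $D$ on $\CondAn^{\light}$ with the Heyer--Mann extension of its restriction to $\ProFin^{\light}$. Functoriality of the extension therefore identifies morphisms of six functor formalisms $\Shv(-;\Sp) \to D$ on $\CondAn^{\light}$ with such morphisms on $\ProFin^{\light}$, reducing the problem to the light profinite case.

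For the profinite case, at the one-point set $\Shv(*;\Sp) \simeq \Sp$ is the initial presentable stable symmetric monoidal $\infty$-category, so initiality provides a unique symmetric monoidal colimit-preserving functor $\alpha_*: \Sp \to D(*)$. For a finite set $F$, the coproduct decomposition $F \simeq \bigsqcup_F *$ combined with (b) yields $\Shv(F;\Sp) \simeq \Sp^F$ and $D(F) \simeq D(*)^F$, forcing $\alpha_F$. For $S \simeq \lim_i S_i$ a light profinite set with $S_i$ finite, the descent presentation of $\Shv$ together with the corresponding statement for $D$ forces $\alpha_S = \lim_i \alpha_{S_i}$. One then upgrades the pointwise data $\{\alpha_S\}$ to a morphism of six functor formalisms on $\mathrm{Corr}(\ProFin^{\light})^\otimes$: compatibility with $f^*$ and with the tensor product follows from the symmetric monoidal nature of $\alpha_*$, while compatibility with $f_!$ uses that every morphism of light profinite sets is proper (so $f_! = f_*$) and is verified using the joint conservativity at points and the dualizability assumptions in (a).

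The main obstacle I anticipate is twofold. First, upgrading the pointwise family $\{\alpha_S\}$ to a coherent morphism of symmetric monoidal functors out of $\mathrm{Corr}$ requires substantial $\infty$-categorical bookkeeping beyond the naive pointwise definition. Second, making precise the identification between morphisms of six functor formalisms on $\CondAn^{\light}$ and on $\ProFin^{\light}$ under the extension procedure is delicate, since the Heyer--Mann construction involves a careful interplay between hypersheaves on $\ProFin^{\light}$ and the morphism classes $E$. Both obstacles should, however, reduce via the conservativity and dualizability of condition (a) to the already-used initial universal property of $\Sp$.
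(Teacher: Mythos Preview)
Your overall architecture---reduce to $\ProFin^\light$ via the extension procedure, then establish initiality there---matches the paper. The paper resolves your second anticipated obstacle cleanly: rather than arguing functoriality of the Heyer--Mann extension directly, it constructs an adjunction
\[
6\mathrm{FF}(\ProFin^\light) \rightleftarrows 6\mathrm{FF}(\CondAn^\light,E)
\]
by left Kan extending along the symmetric monoidal functor $\Corr(\ProFin^\light)^\otimes \to \Corr(\CondAn^\light,E)^\otimes$ inside $\Pr^{L,\otimes}_\kappa$ (using that $\Pr^{L,\otimes}_\kappa$ is presentably symmetric monoidal), and then observes that the left adjoint is fully faithful. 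Condition (b) precisely characterizes the essential image of the continuous profinite formalisms, so initiality transfers. This is a more robust packaging than invoking functoriality of the extension by hand.

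For the profinite case the paper simply invokes \cite[4.18]{Zhu25} (with a mild relaxation), whereas you sketch a direct argument. Your sketch has a genuine gap: you assume profinite descent, i.e.\ that $D(S) \simeq \lim_i D(S_i)$ for $S = \varprojlim_i S_i$ with $S_i$ finite, but this is \emph{not} among the hypotheses. Condition (b) says $D$ takes colimits in $\CondAn^\light$ to limits; upon restriction to $\ProFin^\light$ this yields only the hypercomplete sheaf condition (since hypercovers present their target as a colimit in $\CondAn^\light$), not profinite descent (an inverse limit of finite sets is not a colimit of those finite sets in $\CondAn^\light$). Zhu's original theorem did assume profinite descent separately; the point of the paper's Theorem~\ref{Zhu25 4.18} is that on $\ProFin^\light$ it can be dropped, but the argument for this goes through Zhu's coefficient-system machinery rather than through your limit-of-$\alpha_{S_i}$ construction. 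Your first anticipated obstacle---the coherent upgrade of $\{\alpha_S\}$ to a morphism out of $\Corr$---is exactly where that machinery is doing work, and it does not reduce to the initiality of $\Sp$ in any straightforward way.
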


Given a six functor formalism $D:\mathrm{Corr}(\C,E)\to {\Pr}^L$,	
Heyer-Mann introduced the concepts of $D$-suave maps 
and $D$-prim maps. See \cite[Section 4.4, Section 4.5]{HM24}.
Using these concepts, we are able to show the following result.

	\begin{thm} 
		Let $f:Y\to X$ be a map in $E$
		and $\alpha:D\simeq \Shv(-;\Sp) \to D^\prime$ the unique morphism from Theorem \ref{main result}. We have:
		\begin{itemize}
			\item [(1)]If $f:Y\to X$ is $D$-suave,
			then the commutative square
			\begin{center}
				\begin{tikzcd}
					D(Y)\ar[r,"\alpha_Y"]\ar[d,"f_!"]
					& D^\prime(Y)\ar[d,"f_!"]\\
					D(X)\ar[r,"\alpha_X"] & D^\prime(X)
				\end{tikzcd}
			\end{center}
			is vertically right adjointable. That is, the canonical map
			$ \alpha_Y f^! \to f^! \alpha_X$
			is an equivalence.
			\item [(2)]If $f:Y\to X$ is $D$-prim,
			then the commutative square
			\begin{center}
				\begin{tikzcd}
					D(X)\ar[r,"\alpha_X"]\ar[d,"f^*"]
					& D^\prime(X)\ar[d,"f^*"]\\
					D(Y)\ar[r,"\alpha_Y"] & D^\prime(Y)
				\end{tikzcd}
			\end{center}
			is vertically right adjointable. That is, the canonical map
			$ \alpha_X f_* \to  f_* \alpha_Y$
			is an equivalence.
		\end{itemize}
	\end{thm}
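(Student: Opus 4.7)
The strategy is to reduce both claims to the fact that $\alpha$, being a morphism of six functor formalisms, tautologically preserves the three ``primitive'' operations $f^*$, $f_!$, and $\otimes$. Under the suave (resp.\ prim) hypothesis, $f^!$ (resp.\ $f_*$) admits a formula in terms of these primitive operations and a single dualizing object, so the adjointability in question will follow once the dualizing objects themselves transport correctly under $\alpha$.

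Concretely, I would first recall from \cite[Section 4.4]{HM24} the key characterization: $f$ is $D$-suave iff the natural transformation
$$f^*(-)\otimes \omega^D_f \longrightarrow f^!(-), \qquad \omega^D_f := f^!_D(\mathbf{1}_X),$$
is an equivalence of functors $D(X)\to D(Y)$. A dual characterization from \cite[Section 4.5]{HM24} expresses $f_{*,D}$ for $D$-prim $f$ through $f^*$, $f_!$, $\otimes$, and a prim dualizing object $\pi^D_f$. These characterizations manifest suave/prim maps as dualizable $1$-morphisms in a suitable $(\infty,2)$-enhancement of $\Corr(\CondAn^{\mathrm{light}},E)$, which is the perspective I would use to transport the property along $\alpha$.

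Next, since morphisms of six functor formalisms preserve lax symmetric monoidal data (including dualizability in the target $(\infty,2)$-category ${\Pr}^L_\kappa$), one deduces that $f$ is automatically $D'$-suave (resp.\ $D'$-prim), and that the canonical Beck--Chevalley comparison $\alpha_Y(\omega^D_f)\to \omega^{D'}_f$ (resp.\ $\alpha_Y(\pi^D_f)\to \pi^{D'}_f$) between dualizing objects is an equivalence. Granting this, (1) follows from the chain
\begin{align*}
\alpha_Y f^!_D(M) &\simeq \alpha_Y\bigl(f^*_D(M)\otimes \omega^D_f\bigr)\\
&\simeq f^*_{D'}(\alpha_X M)\otimes \omega^{D'}_f\\
&\simeq f^!_{D'}(\alpha_X M),
\end{align*}
using in turn the suave formula for $D$, symmetric monoidality and $f^*$-compatibility of $\alpha$, and the suave formula for $D'$. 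The proof of (2) is entirely analogous, using the prim characterization and the adjunction $f^*\dashv f_*$.

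The main obstacle is the preservation step: rigorously establishing that $\alpha$ carries $D$-suave/prim structure to $D'$-suave/prim structure and identifies the associated dualizing objects. While morally automatic from the $(\infty,2)$-categorical dualizability picture, verifying it cleanly from the Heyer--Mann definitions requires careful bookkeeping of the lax symmetric monoidal coherences encoded in $\alpha$ as a morphism in $\Fun(\Corr(\CondAn^{\mathrm{light}},E)^\otimes, {\Pr}^{L,\otimes}_\kappa)$.
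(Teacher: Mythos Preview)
Your approach is essentially the same as the first half of the paper's own proof: use the suave/prim formulas $f^! \simeq f^*(-)\otimes \omega_f$ and $f_* \simeq f_!(-\otimes D_f)$, verify via the 2-functor $\varphi_\alpha$ on kernel categories that $\alpha$ preserves the dualizing objects (this is Lemma \ref{preserves SD and PD} and Corollary \ref{preserves delta_f and D_f} in the paper), and then run the chain of equivalences you wrote down. The paper carries out exactly this computation.

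However, the paper then explicitly flags the gap in this argument, and you have the same gap: the chain of equivalences produces \emph{some} isomorphism $\alpha_Y f^! \simeq f^! \alpha_X$, but the statement asserts that the \emph{canonical} Beck--Chevalley map (obtained from the square $f_!\alpha_Y \simeq \alpha_X f_!$ by passing to right adjoints) is an equivalence. It is not automatic that the isomorphism you constructed agrees with that canonical mate. To close this, the paper shows that the morphism $\Psi_{D,X}\to \Psi_{D',X}\circ \varphi_\alpha$ of 2-functors $K_{D,X}\to \Cat_2$ is a 2-\emph{natural} transformation (checked on objects and 2-cells separately), and then invokes the mate calculus of \cite[2.5]{KS74}: applying a 2-natural transformation to the adjunction $1_{D(Y)} \dashv \omega_f$ in $K_{D,X}$ yields two commutative squares in $\Cat_2$ that are mates of each other, which is precisely the statement that the canonical Beck--Chevalley map is invertible. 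Your proposal gestures at the $(\infty,2)$-categorical picture but does not supply this step; without it, the argument only shows the two functors are abstractly equivalent, not that the square is vertically right adjointable.
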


With the morphism $\alpha:\Shv(-;\Sp)\to D(-)$ in hand,
We can give the following application.
	
	\begin{thm}
		Let $D:  \mathrm{Corr}(\CondAn^\mathrm{light},E)\to
	{\Pr}^L_{\kappa}$ be a 6-functor formalism
	satisfying conditions (a) and (b) in Theorem \ref{main result}.
		For any light condensed anima $X$
		with projection $f_X:X\times \mathbb{R} \to X$,
		the pullback functor
		$$ f_X^*:D(X)\to D(X\times \mathbb{R}) $$
		is fully faithful.
	\end{thm}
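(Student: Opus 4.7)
The plan is to reduce fully faithfulness of $f_X^*$ to that of the exceptional pullback $f_X^!$ (which differs by tensoring with an invertible object), and then to prove the latter via the projection formula combined with computations transferred from $\Shv(-;\Sp)$ along the universal morphism $\alpha$ from Theorem~\ref{main result}.

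I would first establish $D$-suavity of $f : \mathbb{R} \to \ast$; base change then yields $D$-suavity of $f_X : X \times \mathbb{R} \to X$ for every light condensed anima $X$. Suavity for $\Shv(-;\Sp)$ is classical, with dualizing object $\omega_f^{\Shv} \simeq \Sigma \mathbf{1}_\mathbb{R}$. Transferring suavity to $D$ is the main technical point: it should follow from conditions~(a) and~(b) together with a presentation of $\mathbb{R}$ via its compact subintervals $[-n,n]$ (which are proper over $\ast$, hence $D$-prim).

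Once suavity is in place, the second theorem of this paper yields $\alpha_{X \times \mathbb{R}} \circ f_X^! \simeq f_X^! \circ \alpha_X$, so the dualizing object transfers: $\omega_{f_X}^D \simeq f_X^*(\Sigma \mathbf{1}_X)$, which is invertible and pulled back from the base. Since $\alpha$ automatically commutes with $f_!$ (as a morphism of $6$-functor formalisms), base change along $X \to \ast$ gives $f_{X,!}(\mathbf{1}_{X \times \mathbb{R}}) \simeq \Sigma^{-1} \mathbf{1}_X$, transferred from the classical $f_!\,\mathbf{1}_\mathbb{R} \simeq \Sigma^{-1}\mathbf{1}_\ast$. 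Consequently, $f_{X,!}(\omega_{f_X}) \simeq \Sigma \mathbf{1}_X \otimes \Sigma^{-1}\mathbf{1}_X \simeq \mathbf{1}_X$ by the projection formula.

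These ingredients combine to give
\[
f_{X,!}\, f_X^!(A) \;\simeq\; A \otimes f_{X,!}(\omega_{f_X}) \;\simeq\; A \otimes \mathbf{1}_X \;\simeq\; A,
\]
and the counit of the adjunction $f_{X,!} \dashv f_X^!$ realizes this identification via the trace map, which is an equivalence because it is the image under $\alpha$ of the corresponding trace in $\Shv(-;\Sp)$. Hence $f_X^!$ is fully faithful. Finally, since $f_X^!(-) \simeq f_X^*(-) \otimes \omega_{f_X}$ and tensoring with the invertible object $\omega_{f_X}$ is an autoequivalence of $D(X \times \mathbb{R})$, fully faithfulness of $f_X^!$ forces that of $f_X^*$. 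The main obstacle is the first step, establishing $D$-suavity of $\mathbb{R} \to \ast$: this does not follow automatically from its counterpart in $\Shv(-;\Sp)$ along the universal morphism $\alpha$ and requires the descent-type hypotheses~(a) and~(b) on $D$.
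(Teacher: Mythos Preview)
Your overall strategy is sound and closely parallels the paper's, but you have misidentified the main obstacle. $D$-suavity of $f:\mathbb{R}\to\ast$ \emph{does} follow automatically from $\Shv(-;\Sp)$-suavity along $\alpha$: any morphism of six functor formalisms induces a $2$-functor $\varphi_\alpha:K_{\Shv(-;\Sp),S}\to K_{D,S}$ between the kernel $2$-categories, and $2$-functors preserve adjunctions, so they send suave objects to suave objects and dualizing objects to dualizing objects (this is the paper's Lemma~\ref{preserves SD and PD} and Corollary~\ref{preserves delta_f and D_f}). No separate descent argument using (a) and (b) is needed here; those hypotheses are used only to produce $\alpha$ in the first place. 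Your proposed route through compact subintervals is unnecessary.

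Beyond this, your argument differs from the paper's in one respect: you show the counit of $f_{X,!}\dashv f_X^!$ is an equivalence and then transfer to $f_X^*$ via the invertibility of $\omega_{f_X}$, whereas the paper shows the unit of $f_X^*\dashv f_{X,*}$ is an equivalence directly, computing $f_{X,*}f_X^*E\simeq \underline{\Hom}(f_{X,!}\mathbf{1},E)[-1]\simeq E$ via the $(f_!,f^!)$-internal-Hom adjunction. The paper's route avoids having to verify that the abstract isomorphism $f_{X,!}f_X^!A\simeq A$ you produce via the projection formula is in fact realized by the counit. Your justification (``it is the image under $\alpha$ of the corresponding trace'') is incomplete: for general $X$ the object $A\in D(X)$ need not lie in the image of $\alpha_X$, so you must instead argue that the counit at general $A$ is $A\otimes(\text{counit at }\mathbf{1}_X)$ and then reduce the latter to the case $X=\ast$ by suave base change. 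This works, but it is more delicate than you indicate.
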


	\begin{ack}
		The author would like to thank Lars Hesselholt 
		for some helpful suggestions.
	\end{ack}

	\section{Six functor formalisms on light profinite sets}
	In this section, we present the universal property of six functor formalisms on light profinite sets.
	
		We always fix a cardinal $\kappa>\omega$.
	And let $\Pr^L_\kappa$
	be the subcategory of $\Pr^L$
	spanned by $\kappa$-presentable $\infty$-categories
	and colimit-preserving functors which preserves $\kappa$-compact objects.

	We start with the definition of light profinite sets and
	light condensed anima from 
	\cite[Section 4]{Cla25}.

\begin{dfn}
	\begin{itemize}
		\item [(1)]A \textit{light profinite set} is a topological space $S$
		which can be written as $S=\varprojlim_i S_i$,
		the sequence inverse limit of finite discrete topological spaces $S_i$. The full subcategory of $\mathrm{Top}$ spanned by light profinite sets is denoted by $\mathrm{ProFin}^\light$.
		\item [(2)] A \textit{light condensed anima} 
		is a functor $X:\mathrm{ProFin}^{\light,\op}\to \An$ such that:
		\begin{itemize}
			\item [(a)]
			$X(\sqcup_{i\in I}S_i )\stackrel{\sim}{\to}
			\prod_{i\in I} X(S_i)$, where $I$ is a finite set and
			each $S_i$ is a light profinite set.
			\item [(b)] If $S_\bullet \to S$ is a hypercover of $S$
			in $\mathrm{ProFin}^\light$, then the canonical map
			$ X(S)\to \lim_{[n]\in \Delta} X(S_n) $
			is an equivalence.
		\end{itemize}
		The full subcategory of 
		$\Fun( \mathrm{ProFin}^{\light,\op}, \An )$
		spanned by light condensed anima is denoted by
		$\CondAn^\light$.
	\end{itemize}
	
\end{dfn}

	\begin{ntt}
		Given a geometric setup $(\C,E)$,
		we use $6\mathrm{FF}(\C,E)$ to denote
		the $\infty$-category of all six functor formalisms on $(\C,E)$. That is, we have
		$$6\mathrm{FF}(\C,E)\simeq
		\Fun^\mathrm{lax}( \mathrm{Corr}(\C,E)^\otimes,	{\Pr}^{L,\otimes}_{\kappa} )
		\simeq
		\CAlg( \Fun( \mathrm{Corr}(\C,E),
		{\Pr}^L_{\kappa})^\otimes ).
		$$
	\end{ntt}

	\begin{rem} \label{lax sym mon Corr}
		Recall from \cite[2.3.5]{HM24}
		that every morphism
		$(\C,\C_0)\to (\D,\D_0)$ of geometric setups
		will induce a lax symmetric monoidal functor
		$$\mathrm{Corr}(\C,\C_0)^\otimes
		\to \mathrm{Corr}(\D,\D_0)^\otimes .$$
		
\begin{itemize}
	\item [(1)]	Since $(\ProFin^\light,\text{all})\to 
	(\mathrm{LCH},\text{all}) $
	is a morphism of geometric setups, 
	we get a lax symmetric monoidal functor
	$$\Corr(\ProFin^\light)^\otimes
	\to  \Corr( \mathrm{LCH})^\otimes,$$ 
	which is indeed symmetric monoidal
	by \cite[2.3.6]{HM24}.
	\item [(2)]	Since $(\ProFin^\light,\text{all})\to 
	(\CondAn^\light,E) $
	is a morphism of geometric setups
	by \cite[3.4.11]{HM24}, 
	we get a lax symmetric monoidal functor
	$$\Corr(\ProFin^\light,\text{all})^\otimes
	\to  \Corr( \CondAn^\light,E )^\otimes,$$ 
	which is indeed symmetric monoidal
	by \cite[2.3.6]{HM24}.
\end{itemize}		
	
	\end{rem}

	Recall that a symmetric monoidal $\infty$-category
	$\C^\otimes$ is called \textit{presentably symmetric monoidal},
	if the underlying $\infty$-category $\C$
	is presentable, and the tensor functor
	$-\otimes-:\C\times \C\to \C $
	preserves colimits separately in each variable.
	
	\begin{prop}  [\text{\cite[3.8]{Nik16}}] \label{kan ext of lax sym mon functor}
		Let $q:\C^\otimes \to \D^\otimes $
		be a symmetric monoidal functor between symmetric monoidal
		$\infty$-categories $\C^\otimes$ and $\D^\otimes$.
		For any presentably symmetric monoidal $\infty$-category
		$\E^\otimes$, the functor
		$q^*:\Fun(\D,\E)^\otimes\to \Fun(\C,\E)^\otimes $
		induced by pre-composition with $q$
		admits a symmetric monoidal left adjoint
		$q_!:\Fun(\C,\E)^\otimes\to \Fun(\D,\E)^\otimes.$ 
	\end{prop}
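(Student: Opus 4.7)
The plan is to equip both functor categories with their Day convolution symmetric monoidal structures, realize $q_!$ as pointwise left Kan extension along $q$, and upgrade it to a symmetric monoidal functor via the functoriality of Day convolution in the source category.

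First, since $\C^{\otimes}$ and $\D^{\otimes}$ are small symmetric monoidal $\infty$-categories and $\E^{\otimes}$ is presentably symmetric monoidal, Day convolution produces presentably symmetric monoidal structures on $\Fun(\C,\E)$ and $\Fun(\D,\E)$, with tensor product given by the coend
\[ (F\otimes G)(d)\simeq \int^{d_1,d_2}\Map_\D(d_1\otimes d_2,d)\otimes F(d_1)\otimes G(d_2). \]
Separate cocontinuity of $\otimes_{\E}$ guarantees the same for the Day convolution, so both functor categories are presentably symmetric monoidal.

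Next, the restriction functor $q^{*}$ preserves both limits and colimits, because both are computed pointwise in $\E$. Hence the adjoint functor theorem produces a left adjoint $q_{!}$, which by the pointwise Kan extension formula is
\[ q_{!}F(d)\simeq \int^{c\in\C}\Map_\D(qc,d)\otimes F(c). \]

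The main step is to promote $q_{!}$ to a symmetric monoidal functor. The cleanest route is via the universal property of Day convolution, which exhibits the assignment $\C^{\otimes}\mapsto\Fun(\C,\E)^{\otimes}$ as a functor from small symmetric monoidal $\infty$-categories to presentably symmetric monoidal $\infty$-categories with symmetric monoidal colimit-preserving functors; $q$ therefore induces a symmetric monoidal colimit-preserving functor $\Fun(\C,\E)^{\otimes}\to\Fun(\D,\E)^{\otimes}$ which, by evaluation on corepresentables and uniqueness of adjoints, must coincide with $q_{!}$. As a sanity check one verifies the pointwise equivalence
\[ (q_{!}F\otimes q_{!}G)(d)\simeq \int^{c_1,c_2}\Map_\D(q(c_1\otimes c_2),d)\otimes F(c_1)\otimes G(c_2)\simeq q_{!}(F\otimes G)(d) \]
by Fubini, Yoneda, and the symmetric monoidality of $q$. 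The principal obstacle in a purely hands-on approach is coherently assembling the pointwise equivalence into an $\infty$-categorical symmetric monoidal natural transformation, which is precisely what the universal-property argument of Nikolaus bypasses.
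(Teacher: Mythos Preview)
The paper does not supply its own proof of this proposition; it simply cites \cite[3.8]{Nik16}. Your proposal reconstructs exactly the argument behind that reference: endow $\Fun(\C,\E)$ and $\Fun(\D,\E)$ with Day convolution, identify $q_!$ with left Kan extension along $q$, and obtain the symmetric monoidal structure on $q_!$ from the functoriality of Day convolution in the source variable. This is correct and is the intended approach.

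One small point worth making explicit: you assume $\C$ and $\D$ are small, which the statement does not say. In the paper's applications the source categories are correspondence categories such as $\Corr(\CondAn^{\light},E)$, which are not small in the ordinary sense; the paper silently handles this by working in a universe where $\Pr^L_\kappa$ is presentable (invoking \cite[2.3]{Aok25}), so that these correspondence categories become small relative to that universe. Your argument goes through unchanged once this convention is in place, but it would be cleaner to flag it.
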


\begin{prop} \label{profinite-lch 6ff}
	There is an adjunction
		\begin{center}
		\begin{tikzcd}
			6\mathrm{FF}( \ProFin^\light)
			\ar[r,shift left] &
			6\mathrm{FF}( \mathrm{LCH} ),
			\ar[l,shift left]
		\end{tikzcd}
	\end{center}
	where the right adjoint is given by restriction.
	And the left adjoint
	$ 6\mathrm{FF}(\ProFin^\light)\to 6\mathrm{FF}(\mathrm{LCH})$
	is fully faithful.
\end{prop}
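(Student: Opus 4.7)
The plan is to derive the adjunction from Proposition \ref{kan ext of lax sym mon functor}, observe that it lands in six-functor formalisms essentially for free, and then confront fully faithfulness as the main point.

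First apply Proposition \ref{kan ext of lax sym mon functor} to the symmetric monoidal functor $q:\Corr(\ProFin^\light)^\otimes\to\Corr(\mathrm{LCH})^\otimes$ of Remark \ref{lax sym mon Corr}(1), with target the presentably symmetric monoidal $\infty$-category $\Pr^{L,\otimes}_\kappa$. This produces a symmetric monoidal adjunction $q_!\dashv q^*$ on functor $\infty$-categories equipped with Day convolution, and passing to commutative algebra objects yields an adjunction
$$q_!:\CAlg\bigl(\Fun(\Corr(\ProFin^\light),\Pr^L_\kappa)^\otimes\bigr)\rightleftarrows\CAlg\bigl(\Fun(\Corr(\mathrm{LCH}),\Pr^L_\kappa)^\otimes\bigr):q^*$$
on lax symmetric monoidal functors, with right adjoint $q^*$ literal precomposition with $q$.

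By the identification of $6\mathrm{FF}$ recorded in the notation introduced above, the two sides are exactly $6\mathrm{FF}(\ProFin^\light)$ and $6\mathrm{FF}(\mathrm{LCH})$: no further 6FF condition needs to be checked, because every morphism in $\Pr^L_\kappa$ is a colimit-preserving functor between presentable $\infty$-categories and hence admits a right adjoint by the adjoint functor theorem, so any lax symmetric monoidal functor into $\Pr^{L,\otimes}_\kappa$ is already a 6-functor formalism. The above is therefore the desired adjunction $q_!:6\mathrm{FF}(\ProFin^\light)\rightleftarrows 6\mathrm{FF}(\mathrm{LCH}):q^*$.

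The remaining and most delicate point is fully faithfulness of $q_!$, equivalently that the unit $D\to q^*q_!D$ is an equivalence for every $D\in 6\mathrm{FF}(\ProFin^\light)$. Because the target $\Pr^L_\kappa$ is cocomplete, the Kan extension is computed pointwise, so at a light profinite set $S$ this reads
$$D(S)\;\longrightarrow\;(q_!D)(S)\;\simeq\;\colim_{(c,\,q(c)\to S)\in q/S}D(c),$$
where the comma $\infty$-category indexes pairs of a light profinite set $c$ and a span $c\leftarrow w\to S$ with $w$ an arbitrary locally compact Hausdorff space. The principal obstacle is that $q$ between correspondence categories is \emph{not} fully faithful -- the middle object $w$ of a span need not be light profinite, so the tautological pair $(S,\mathrm{id}_S)$ is not a terminal object of $q/S$, and the abstract Kan-extension argument for fully faithfulness fails. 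Establishing that the comparison map is nonetheless an equivalence requires exploiting the full 6FF structure of $D$ beyond its bare lax symmetric monoidal data: the plan is to use the $!$-pushforwards, $*$-pullbacks and base-change identities encoded by $D$ to show that, after applying $D$, the contribution of every pair $(c,f)$ with non-profinite middle object is absorbed by the contribution of a pair with profinite middle, so that the colimit collapses to the value $D(S)$ of the tautological pair. Carrying out this collapse in the presentable $\kappa$-compact setting of $\Pr^L_\kappa$ is the main technical step and the part I expect to be hardest.
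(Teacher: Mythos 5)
Your construction of the adjunction is exactly the paper's argument: apply Proposition \ref{kan ext of lax sym mon functor} to the symmetric monoidal functor $\Corr(\ProFin^\light)^\otimes\to\Corr(\mathrm{LCH})^\otimes$ of Remark \ref{lax sym mon Corr}, using that $\Pr^{L,\otimes}_\kappa$ is presentably symmetric monoidal, and pass to commutative algebra objects in the Day convolution structure; your remark that no separate 6FF condition needs checking is consistent with the paper's Notation, which defines $6\mathrm{FF}$ as lax symmetric monoidal functors into $\Pr^{L,\otimes}_\kappa$. So that half matches.

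The gap is the second assertion. Your proposal does not prove that the left adjoint is fully faithful: you correctly observe that $\Corr(\ProFin^\light)\to\Corr(\mathrm{LCH})$ is not fully faithful (a span between light profinite sets may have an arbitrary locally compact Hausdorff middle object), so the identity correspondence is not terminal in the comma category and the unit $D\to q^*q_!D$ is not an equivalence for formal Kan-extension reasons alone --- but then you only announce a plan ("the contribution of every pair with non-profinite middle object is absorbed"), with no cofinality statement, no descent input, and no use of any specific property of $D$ that one could verify. As written this is a declaration of intent, not an argument, and it is precisely the step on which the whole claim rests; note that the restriction to commutative algebra objects does not rescue the formal argument either, since restriction along a non-fully-faithful functor is in general not "fully faithful on algebras". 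For comparison, the paper itself offers no argument here --- it simply asserts that fully faithfulness is clear --- so your diagnosis that this is the genuinely delicate point is well taken and arguably exposes a thin spot in the paper; but to count as a proof your proposal would have to actually exhibit the claimed collapse of the colimit $\colim_{q/S}D$ onto $D(S)$ (for instance by producing a cofinal subcategory of spans with light profinite middle object, or by an explicit descent argument using the $!$-functoriality of $D$), and it does not.
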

\begin{proof}
	By Remark \ref{lax sym mon Corr},
we have the symmetric monoidal functor
$\Corr(\ProFin^\light)^\otimes
\to  \Corr( \mathrm{LCH} )^\otimes$.
Since $\Pr_\kappa^{L,\otimes}$ 
is a presentably symmetric monoidal $\infty$-category,
which is from \cite[2.3]{Aok25}, 
by Proposition \ref{kan ext of lax sym mon functor},
we know that the functor
$$
\Fun(\Corr( \mathrm{LCH} ),
{\Pr}^L_{\kappa})^\otimes
\to 
\Fun( \mathrm{Corr}(\ProFin^\light),
{\Pr}^L_{\kappa})^\otimes ,
$$
admits a symmetric monoidal left adjoint.
Therefore, we have the adjunction
\begin{center}
	\begin{tikzcd}
		\Fun(
		\mathrm{Corr}(\ProFin^\light),
		{\Pr}^L_{\kappa})^\otimes
		\ar[r,shift left] &
		\Fun(\Corr( \mathrm{LCH} ),
		{\Pr}^L_{\kappa})^\otimes,
		\ar[l,shift left]
	\end{tikzcd}
\end{center}
where the left adjoint is symmetric monoidal,
and the right adjoint is lax symmetric monoidal.
Passing to commutative algebras, we get the adjunction
\begin{center}
	\begin{tikzcd}
		\CAlg(\Fun(
		\mathrm{Corr}(\ProFin^\light),
		{\Pr}^L_{\kappa})^\otimes )
		\ar[r,shift left] &
		\CAlg(\Fun(\Corr( \mathrm{LCH} ),
		{\Pr}^L_{\kappa})^\otimes),
		\ar[l,shift left]
	\end{tikzcd}
\end{center}
which is exactly the adjunction
\begin{tikzcd}
	6\mathrm{FF}( \ProFin^\light)
	\ar[r,shift left] &
	6\mathrm{FF}(  \mathrm{LCH} ).
	\ar[l,shift left]
\end{tikzcd}
It's clear that the left adjoint
$ 6\mathrm{FF}(\ProFin^\light)\to 6\mathrm{FF}(\mathrm{LCH})$
is fully faithful.
\end{proof}

	The following result is given by \cite[4.18]{Zhu25}.
	But here we can relax some conditions.
	
	\begin{thm}\label{Zhu25 4.18}
		The six functor formalism
		$$\Shv(-;\Sp): \mathrm{Corr}(\ProFin^\mathrm{light})
		\to {\Pr}_\kappa^L$$
		is initial among all six functor formalisms
		$D:\mathrm{Corr}(\ProFin^\mathrm{light})
		\to  {\Pr}_\kappa^L$ satisfying:
		\begin{itemize}
			\item [(a)]  For any light profinite set $S$,
			the $\infty$-category $D(S)$ is dualizable,
			and the family $\{s^*:D(S)\to D(*)\}_{s\in S}$
			is jointly conservative.
			\item [(b)]
			$D:\ProFin^{\light,\op}\to  {\Pr}_\kappa^L$
			is a hypercomplete sheaf. 
		\end{itemize}
	In other words, if we let $6\mathrm{FF}(\ProFin^\mathrm{light})^\cont$
		be the full subcategory of 
		$6\mathrm{FF}(\ProFin^\mathrm{light})$
		spanned by those $D$ satisfying (a) and (b),
		then $\Shv(-;\Sp)\in  6\mathrm{FF}(\ProFin^\mathrm{light})^\cont $
		is initial.
		We denote the unique morphism
		by $\alpha:\Shv(-;\Sp)\to D(-)$.
	\end{thm}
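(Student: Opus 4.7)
The plan is to reduce to the initial property of $\Sp$ in an appropriate subcategory of $\CAlg({\Pr}^L_\kappa)$, exploiting that conditions (a) and (b) force any such $D$ to be rigidly controlled by its value $D(*)$ at a point. An alternative route would be to transfer the question to $\mathrm{LCH}$ via Proposition \ref{profinite-lch 6ff} and invoke Zhu's original theorem there, but verifying that the Kan-extended $LD$ still satisfies Zhu's full continuity package (canonical open descent, profinite descent, hypercomplete conservativity) appears at least as hard as the direct approach.

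First I would compute $D$ on all of $\ProFin^\light$ in terms of $D(*)$. The symmetric monoidal structure on $\Corr(\ProFin^\light)^\otimes$, coming from products in $\ProFin^\light$, forces the product axiom $D(S \sqcup T) \simeq D(S) \times D(T)$, so $D(S) \simeq D(*)^{|S|}$ for finite discrete $S$. Since every light profinite set $S$ is a countable cofiltered limit $S \simeq \lim_i S_i$ of finite sets, hypercomplete descent (condition (b)) upgrades this to
$$D(S) \simeq \lim_i D(*)^{|S_i|} \simeq \Shv(S;\Sp) \otimes_{\Sp} D(*),$$
where the second equivalence uses the analogous descent for $\Shv(-;\Sp)$ together with dualizability of $D(*)$ from (a), which allows the tensor to commute with the relevant limit.

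Second, I would construct $\alpha: \Shv(-;\Sp) \to D$ from the universal map $\Sp \to D(*)$. The $6$-functor formalism structure on finite sets, combined with dualizability, forces $D(*)$ to be stable (proper base change and duality at finite covers yield additivity, and the presence of suspension/loop adjointness upgrades this to stability), placing $D(*)$ in the subcategory of $\CAlg({\Pr}^L_\kappa)$ spanned by stable presentably symmetric monoidal $\infty$-categories, in which $\Sp$ is initial. The unique symmetric monoidal colimit-preserving functor $\Sp \to D(*)$, tensored pointwise with $\Shv(-;\Sp)$, produces the required natural transformation of underlying functors $\ProFin^{\light,\op} \to {\Pr}^L_\kappa$, using the identification in the previous step to see that the target indeed lands in $D$.

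Third, the main obstacle is upgrading $\alpha$ from a natural transformation of underlying functors to a morphism in $6\mathrm{FF}(\ProFin^\light)$, i.e., to a morphism of lax symmetric monoidal functors out of $\Corr(\ProFin^\light)^\otimes$, which requires compatibility with all $f_!$-functors in addition to the symmetric monoidality. Dualizability from (a) allows rephrasing $f_!$-compatibility via adjunction as $f^!$-compatibility, and the pointwise conservativity of $\{s^*\}_{s \in S}$ from (a) reduces verification to the case $S = *$, where $\alpha_*: \Sp \to D(*)$ is manifestly the universal map. For uniqueness, any competing $\alpha': \Shv(-;\Sp) \to D$ evaluated at $*$ gives a map in $\CAlg({\Pr}^L_\kappa)$ which must coincide with $\alpha_*$ by the initiality of $\Sp$, and descent (b) propagates $\alpha' \simeq \alpha$ everywhere. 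The delicate formal bookkeeping to promote natural transformations of underlying functors to morphisms of lax symmetric monoidal functors is handled via the symmetric monoidal Kan extension of Proposition \ref{kan ext of lax sym mon functor}, applied to the canonical inclusion of $\ProFin^\light$ into $\Corr(\ProFin^\light)^\otimes$.
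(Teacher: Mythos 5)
Your route is genuinely different from the paper's (the paper transfers $D$ along the fully faithful left adjoint of Proposition \ref{profinite-lch 6ff} into $6\mathrm{FF}(\mathrm{LCH})$ and then reruns Zhu's argument there, checking adjointability only on maps of light profinite sets), but as written it has gaps at exactly the points where the real content sits. The central one is your claim that conditions (a) and (b) force $D(*)$ to be stable. On light profinite sets the six functor axioms give you at most semiadditivity of $D(*)$ (finite covers, once you have shown they are $D$-proper and $D$-\'etale, identify finite products with finite coproducts); the parenthetical ``suspension/loop adjointness upgrades this to stability'' is not an argument, since loop/suspension adjunctions exist in any pointed presentable category and semiadditive presentably symmetric monoidal categories (e.g.\ $E_\infty$-monoids in anima, connective spectra) are not stable, nor is $\Sp$ initial among them. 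In Zhu's setting stability-type consequences are extracted from the geometry of locally compact Hausdorff spaces (intervals, $\mathbb{R}$, one-point compactifications), which is precisely what the paper imports by passing through $6\mathrm{FF}(\mathrm{LCH})$; working only with profinite sets you have no access to that, so your appeal to the initiality of $\Sp$ in the stable subcategory of $\CAlg(\Pr^L_\kappa)$ is unsupported, and with it the identification $D(S)\simeq \Shv(S;\Sp)\otimes_{\Sp} D(*)$, which already presupposes that $D(*)$ is an $\Sp$-algebra.

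Two further steps are also not justified. First, you compute $D(S)\simeq \lim_i D(*)^{|S_i|}$ from ``hypercomplete descent (condition (b))'', but (b) is descent along hypercovers $S_\bullet\to S$, not continuity along the cofiltered presentation $S\simeq \lim_i S_i$; the latter is Zhu's separate ``profinite descent'' condition, and the whole point of the relaxed Theorem \ref{Zhu25 4.18} is that this condition is \emph{not} assumed, so you cannot use it without deriving it. Second, the upgrade of a pointwise-defined transformation to a morphism in $6\mathrm{FF}(\ProFin^\light)$, i.e.\ compatibility with all $f_!$ together with the base-change and projection-formula $2$-cells, is the technical heart of Zhu's proof (the vertical right adjointability statements), and it does not reduce to the case $S=*$ by joint conservativity of stalks: conservativity lets you test equivalences of objects, not the coherence of a lax symmetric monoidal structure over $\Corr(\ProFin^\light)^\otimes$, and Proposition \ref{kan ext of lax sym mon functor} concerns Kan extension along a symmetric monoidal functor between correspondence categories, not along the inclusion $\ProFin^{\light,\op}\hookrightarrow \Corr(\ProFin^\light)$. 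As it stands the proposal would need a genuinely new argument for stability of $D(*)$ and for the $f_!$-compatibility before it could replace the paper's reduction to Zhu's theorem.
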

\begin{proof}
	We will use the notation in \cite[Theorem 4.18]{Zhu25}. 
	Let $D\in 6\mathrm{FF}(\ProFin^\light)$
	satisfies the assumptions,
	whose image under the functor
$ 6\mathrm{FF}(\ProFin^\light)\to 6\mathrm{FF}(\mathrm{LCH})$
is still denoted by $D$.
Now since $\Shv(-;\Sp)\in \mathrm{CoSys}^c$
is initial,
there exists a unique morphism
$\alpha:\Shv(-;\Sp)\to D(-)$ in $\mathrm{CoSys}^c$.
In the proof of \cite[Theorem 4.18]{Zhu25},
Zhu showed that for any proper map $p:X\to Y$ of locally compact Hausdorff spaces, the commutative square
\begin{center}
	\begin{tikzcd}
		\Shv(Y;\Sp)\ar[r,"\alpha_Y"]\ar[d,"p^*"]
		& D(Y)\ar[d,"p^*"]\\
		\Shv(X;\Sp)\ar[r,"\alpha_X"]
		& D(X)
	\end{tikzcd}
\end{center}
is vertically right adjointable.
However, by Proposition \ref{profinite-lch 6ff},
it suffices to show that for any map $p:X\to Y$ of light profinite sets, the above commutative square is vertically right adjointable.
Now, if we drop the profinite descent condition in \cite[Definition 4.11]{Zhu25}, 
all things in the proof of \cite[Theorem 4.18]{Zhu25}
still work.
\end{proof}

\begin{rem}
	If we let $6\mathrm{FF}(\mathrm{LCH})^\cont$
	be the essential image of 
	$6\mathrm{FF}(\ProFin^\mathrm{light})^\cont$
	under the fully faithful functor
	$ 6\mathrm{FF}(\ProFin^\light)\hookrightarrow 6\mathrm{FF}(\mathrm{LCH})$,
	then there is an equivalence
	$$
	6\mathrm{FF}(\ProFin^\light)^\cont
	\stackrel{\sim}{\to} 6\mathrm{FF}(\mathrm{LCH})^\cont.
	$$
\end{rem}

\begin{rem}
	Let $f:S\to T$ be a map between light profinite sets.
There are two commutative squares
	\begin{center}
		\begin{tikzcd}
			\Shv(T;\Sp)\ar[r,"\alpha_T"]\ar[d,"f^*"]
			& D(T)\ar[d,"f^*"]\\
			\Shv(S;\Sp)\ar[r,"\alpha_S"]
			& D(S)
		\end{tikzcd}
		and
		\begin{tikzcd}
			\Shv(S;\Sp)\ar[r,"\alpha_S"]\ar[d,"f_!"]
			& D(S)\ar[d,"f_!"]\\
			\Shv(T;\Sp)\ar[r,"\alpha_T"]
			& D(T).
		\end{tikzcd}
	\end{center}
\end{rem}

	Next, we recall the concrete construction
	of the functor $\alpha_S:\Shv(S;\Sp)\to D(S)$,
	for any light profinite set $S$.
	\begin{cons} \label{cons of alpha}
		Let $S$ be a light profinite set.
		Note that the $\infty$-category $\Shv(S;\Sp)$
		is generated under colimits
		by objects of the form $i_{U!}1_{\Shv(U;\Sp)}$,
		where $i_U :U\hookrightarrow S$ is an open subset of $S$.
		Recall from \cite[3.10]{Zhu25} that the functor
		$$\alpha_S:\Shv(S;\Sp)\to D(S)$$
		is a colimit-preserving functor,
		which sends $ i_{U!}1_{\Shv(U;\Sp)} $
		to $i_{U!} 1_{D(U)}$,
		where $i_U: U\hookrightarrow S$
		is an open subset of $S$.
	\end{cons}

	With this construction, one has the following basic observation.
	\begin{rem} \label{sym mon for profinite set case}
		Let $S$ be a light profinite set.
	By the construction of the functor $\alpha_S$, we have:
		\begin{itemize}
			\item [(1)]
			$\alpha_S 1_{\Shv(S;\Sp) }\simeq 1_{D(S)}$. 
			\item [(2)] For any $A, B\in \Shv(S;\Sp)$,
			we have $\alpha_S(A)\otimes \alpha_S(B)\simeq 
			\alpha_S(A\otimes B)$.
			Indeed, it suffices to show this equivalence on generators.
			Note that for open subsets
			$i_{U}:U\hookrightarrow S$
			and  $i_{V}:V\hookrightarrow S$,
			we have
			$$
			i_{U!} 1_{ \Shv(U;\Sp) } \otimes i_{V! }1_{\Shv(V;\Sp)}
			\simeq i_{U!} i_U^* i_{V!}1_{\Shv(V;\Sp)}
			\simeq i_{U\cap V !} 1_{ \Shv(U\cap V;\Sp)},
			$$
			thus we are able to show the equivalence
			$$ \alpha_S(  i_{U!} 1_{ \Shv(U;\Sp) } \otimes i_{V! }1_{\Shv(V;\Sp)}  )
			\simeq  i_{U!} 1_{D(U)}\otimes i_{V!} 1_{D(V)}.
			$$
		\end{itemize}
	\end{rem}

	\section{Six functor formalisms on light condensed anima}
	
	In this section, we show that the six functor formalism
	$$\Shv(-;\Sp):\mathrm{Corr}(\CondAn^\mathrm{light},E)\to
	{\Pr}^L_{\kappa}$$
	is initial among all six functor formalisms satisfying some conditions, see Theorem \ref{initial}.

	We recall the extension result of six functor formalisms
	given by Heyer-Mann.
	\begin{thm} [\text{\cite[3.4.11]{HM24}}]
		There is a geometric class $E$ of maps of light condensed anima with the following properties:
		\begin{itemize}
			\item [(1)] There exists a unique six functor formalism
			$$\Shv(-;\Sp):\mathrm{Corr}(\CondAn^\light,E)\to
			{\Pr}^L$$
			extending the six functor formalism
			$\Shv(-;\Sp):\mathrm{Corr}(\ProFin^\light)\to \Pr^L$.
			\item [(2)] If $f:Y\to X$ is a map of light condensed anima such that for any map $S\to X$ from a light profinite set $S$, the pullback $Y\times_X S\to S$
			lies in $E$, then $f$ lies in $E$.
			\item [(3)] If $f:Y \to X$ is a map of light condensed anima which is $!$-locally on source or target in $E$,
			then $f$ lies in $E$.
			\item [(4)] If $f:X\to S$ lies in $E$ and $S$ is a light profinite set, then $X$ admits a 
			$\Shv(-;\Sp)^!$-cover by light profinite sets.
		\end{itemize}
		
		The maps in $E$ are called $!$-able maps.    
	\end{thm}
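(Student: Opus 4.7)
The plan is to apply a descent-based extension procedure, guided by the general framework in \cite[Section 3.4]{HM24}, to the six functor formalism $\Shv(-;\Sp) : \Corr(\ProFin^\light)^\otimes \to \Pr^{L,\otimes}_\kappa$ constructed in the previous section. Since every light condensed anima is by definition a hypercomplete sheaf on $\ProFin^\light$, the natural approach is to extend by right Kan extension along the Yoneda embedding $\ProFin^\light \hookrightarrow \CondAn^\light$ and to define the class $E$ in a way that is forced by the descent requirement.

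First, I would extend the pullback part of the formalism as follows: for a light condensed anima $X$, define
\[ \Shv(X;\Sp) := \lim_{S \to X} \Shv(S;\Sp), \]
with the limit running over light profinite sets $S$ equipped with a map to $X$. This produces a limit-preserving functor $\CondAn^{\light,\op} \to \Pr^L_\kappa$ restricting to the original one on $\ProFin^\light$, and it is the unique such extension by the universal property of right Kan extension, yielding the uniqueness in (1). Moreover, because $\CondAn^\light$ is generated under colimits by $\ProFin^\light$, any two extensions agreeing on profinite sets coincide.

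Second, I would define $E$ inductively as the largest class of maps in $\CondAn^\light$ satisfying the required closure properties: it restricts to all maps on light profinite sets, is closed under composition, base change, and diagonals, and is $!$-local on both source and target. With this definition, properties (2) and (3) hold by construction. Property (4) is then forced: a map $f:X\to S$ with $S$ profinite lies in $E$ precisely when $X$ admits a hypercover by light profinite sets along which $f_!$ glues consistently, i.e., a $\Shv(-;\Sp)^!$-cover.

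The main obstacle will be to upgrade the pointwise assignment $X \mapsto \Shv(X;\Sp)$ together with the $f^*$, $f_!$, and monoidal structures to a genuine lax symmetric monoidal functor out of $\Corr(\CondAn^\light, E)^\otimes$. For this I would invoke the monoidal-descent results of \cite[Section 3.2--3.3]{HM24}: because the class $E$ is defined by $!$-local descent from the profinite case, and the profinite formalism already satisfies base change and the projection formula (Theorem \ref{Zhu25 4.18}), the lax symmetric monoidal structure and all coherence data descend to $\CondAn^\light$. Verifying this coherence in the correspondence $\infty$-category is the technical heart of the argument; it relies on hypercomplete descent on $\ProFin^\light$ so that all gluing diagrams compute what one expects, together with the fact that taking spans commutes with cofiltered limits of geometric setups under mild stability hypotheses. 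Once the lax monoidal extension is in place, property (1) follows from its uniqueness, and properties (2)--(4) are straightforward consequences of how $E$ was constructed.
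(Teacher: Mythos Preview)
The paper does not prove this theorem at all: it is quoted verbatim from \cite[3.4.11]{HM24} as an external input, with no argument supplied. So there is no ``paper's own proof'' to compare against; your sketch is effectively an attempt to reconstruct the Heyer--Mann argument rather than something the present paper does.

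That said, a few comments on the sketch itself. The broad shape---right Kan extend $D^*$ along the Yoneda embedding, then build $E$ by iterated $!$-local extension starting from all maps of profinite sets---is indeed the strategy of \cite[Section~3.4]{HM24}. But two points are looser than they should be. First, $E$ is not defined as ``the largest class closed under composition, base change, diagonals, and $!$-locality''; that description does not obviously pin down a well-defined class, and in \cite{HM24} the construction is a transfinite iteration of specific extension steps (small representable maps, then $!$-local on target, then $!$-local on source), each of which requires checking that the partially-extended formalism still satisfies base change and projection formula before proceeding. Your phrase ``properties (2) and (3) hold by construction'' hides exactly the nontrivial content. Second, your appeal to Theorem~\ref{Zhu25 4.18} for base change and the projection formula on profinite sets is misplaced: those are features of the input six functor formalism $\Shv(-;\Sp)$ on $\ProFin^\light$ itself, not consequences of Zhu's universal property, and the extension machinery of \cite{HM24} needs them as hypotheses rather than conclusions.
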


\begin{cor} \label{adjunction profinite and cond}
	There is an adjunction
	\begin{center}
			\begin{tikzcd}
			6\mathrm{FF}( \ProFin^\light)
			 \ar[r,shift left] &
			6\mathrm{FF}(  \CondAn^\light,E ),
			 \ar[l,shift left]
		\end{tikzcd}
	\end{center}
	where the right adjoint is given by restriction.
	And the left adjoint
	$ 6\mathrm{FF}( \ProFin^\light)
	\to 	6\mathrm{FF}(  \CondAn^\light,E )$
	is fully faithful.
\end{cor}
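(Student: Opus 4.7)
The plan is to mimic the proof of Proposition~\ref{profinite-lch 6ff} almost verbatim, with the target geometric setup $(\mathrm{LCH}, \text{all})$ replaced by $(\CondAn^\light, E)$. The proof needed is a purely formal Day-convolution argument, so the only real ingredient beyond what is already in hand is the morphism of geometric setups supplied by the excerpt.

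First, I would invoke Remark~\ref{lax sym mon Corr}(2) to obtain the symmetric monoidal functor
$$ q \colon \Corr(\ProFin^\light)^\otimes \to \Corr(\CondAn^\light, E)^\otimes.$$
Since $\Pr^{L,\otimes}_{\kappa}$ is presentably symmetric monoidal by \cite[2.3]{Aok25}, Proposition~\ref{kan ext of lax sym mon functor} then furnishes a symmetric monoidal left adjoint $q_!$ to the precomposition functor $q^*$ at the level of Day-convolution functor categories. Passing to commutative algebras (which preserves the adjunction and upgrades $q^*$ to a lax symmetric monoidal functor automatically) produces the desired adjunction
$$ 6\mathrm{FF}(\ProFin^\light) \rightleftarrows 6\mathrm{FF}(\CondAn^\light, E), $$
with the right adjoint given by restriction along $q$.

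The remaining and most delicate point is fully faithfulness of the left adjoint. Following Proposition~\ref{profinite-lch 6ff}, I would reduce to showing that the unit $D \to q^* q_! D$ is an equivalence for every $D \in 6\mathrm{FF}(\ProFin^\light)$. The hard part will be that a direct Kan-extension argument does not quite apply: the functor $q$ is not fully faithful as a functor of $\infty$-categories, since the anima of correspondences between two light profinite sets is strictly enlarged when the middle term is allowed to be an arbitrary $!$-able light condensed anima. The intended workaround is to appeal to a uniqueness-of-extension property in the spirit of Theorem~3.4.11 of \cite{HM24}: the six functor formalism $q_! D$ on $(\CondAn^\light, E)$ must coincide with the Heyer--Mann extension of $D$, and the latter by construction restricts back to $D$ along $q$. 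Once this identification is in place, fully faithfulness at the level of $\CAlg$ follows from fully faithfulness at the underlying level, completing the proof.
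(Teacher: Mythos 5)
Your construction of the adjunction is exactly the paper's argument: the symmetric monoidal functor $q\colon\Corr(\ProFin^\light)^\otimes\to\Corr(\CondAn^\light,E)^\otimes$ from Remark \ref{lax sym mon Corr}(2), presentable symmetric monoidality of ${\Pr}^{L,\otimes}_{\kappa}$ via \cite[2.3]{Aok25}, the symmetric monoidal left adjoint from Proposition \ref{kan ext of lax sym mon functor}, and passage to $\CAlg$. Up to that point you and the paper coincide, and this part is fine.

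The divergence is the full faithfulness of the left adjoint, and here your proposal has a genuine gap. You are right that $q$ is not fully faithful on correspondence categories (the middle term of a span between light profinite sets may be any $!$-able light condensed anima), so the standard ``unit of a left Kan extension along a fully faithful functor'' argument is unavailable; note that the paper itself offers no argument at all for this claim (in the analogous Proposition \ref{profinite-lch 6ff} it is declared ``clear'', and in the proof of this corollary it is simply not mentioned). But your proposed repair, namely that $q_!D$ ``must coincide with the Heyer--Mann extension of $D$'', is unjustified. The operadic left Kan extension $q_!D$ is a free, colimit-type construction, whereas the Heyer--Mann extension is built by descent, with $D'(X)\simeq\lim_{S\to X}D(S)$ over light profinite sets mapping to $X$, and its uniqueness statement characterizes extensions satisfying exactly such limit/descent properties; to invoke it you would first have to prove that $q_!D$ has those properties, which is precisely the nontrivial content you are trying to avoid. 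Moreover, the result you cite (Theorem 3.4.11 of \cite{HM24}, as quoted in the paper) is stated for $\Shv(-;\Sp)$, while the corollary concerns an arbitrary $D\in 6\mathrm{FF}(\ProFin^\light)$ with no hypotheses (a)--(b), so the Heyer--Mann extension machinery need not even apply to $D$. What is actually required is a direct verification that the unit $D\to q^*q_!D$ is an equivalence on light profinite sets (e.g.\ by computing the operadic left Kan extension and a cofinality argument); as it stands, that step is missing from your proposal, and the paper does not supply it either.
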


\begin{proof}
	By Remark \ref{lax sym mon Corr},
	we have the symmetric monoidal functor
	$\Corr(\ProFin^\light,\text{all})^\otimes
	\to  \Corr( \CondAn^\light,E )^\otimes$.
Since $\Pr_\kappa^{L,\otimes}$ 
is a presentably symmetric monoidal $\infty$-category,
which is from \cite[2.3]{Aok25}, 
 by Proposition \ref{kan ext of lax sym mon functor},
	we know that the functor
	 $$
	\Fun(\Corr(\CondAn^\light,E),
	{\Pr}^L_{\kappa})^\otimes
	\to 
	\Fun( \mathrm{Corr}(\ProFin^\light),
	{\Pr}^L_{\kappa})^\otimes ,
	$$
	admits a symmetric monoidal left adjoint.
	Therefore, we have the adjunction
		\begin{center}
		\begin{tikzcd}
		\Fun(
		\mathrm{Corr}(\ProFin^\light),
		{\Pr}^L_{\kappa})^\otimes
			\ar[r,shift left] &
			\Fun(\Corr(\CondAn^\light,E),
	{\Pr}^L_{\kappa})^\otimes,
			\ar[l,shift left]
		\end{tikzcd}
	\end{center}
where the left adjoint is symmetric monoidal,
and the right adjoint is lax symmetric monoidal.
	Passing to commutative algebras, we get the adjunction
		\begin{center}
		\begin{tikzcd}
		\CAlg(\Fun(
			\mathrm{Corr}(\ProFin^\light),
			{\Pr}^L_{\kappa})^\otimes )
			\ar[r,shift left] &
		\CAlg(\Fun(\Corr(\CondAn^\light,E),
				{\Pr}^L_{\kappa})^\otimes),
			\ar[l,shift left]
		\end{tikzcd}
	\end{center}
which is exactly the adjunction
	\begin{tikzcd}
		6\mathrm{FF}( \ProFin^\light)
		\ar[r,shift left] &
		6\mathrm{FF}(  \CondAn^\light,E ).
		\ar[l,shift left]
	\end{tikzcd}

\end{proof}

\begin{thm}\label{initial}
		The six functor formalism
	$$\Shv(-;\Sp):\mathrm{Corr}(\CondAn^\mathrm{light},E)\to
		{\Pr}^L_{\kappa} $$
	is initial among all six functor formalisms
	$D:\mathrm{Corr}(\CondAn^\mathrm{light},E)\to
		{\Pr}^L_{\kappa}$ satisfying:
	\begin{itemize}
		\item [(a)]
		For any light profinite set $S$,
		the $\infty$-category $D(S)$ is dualizable,
		and the family $\{s^*:D(S)\to D(*)\}_{s\in S}$
		is jointly conservative.
		\item [(b)]
		The functor $D:\CondAn^{\mathrm{light},\op}
		\to {\Pr}^L_{\kappa}$
		preserves limits.
	\end{itemize}
	In other words, if we let $6\mathrm{FF}(\CondAn^\mathrm{light},E)^\cont$
be the full subcategory of 
$6\mathrm{FF}(\CondAn^\mathrm{light},E)$
spanned by those $D$ satisfying (a) and (b),
then $\Shv(-;\Sp)\in  
6\mathrm{FF}(\CondAn^\mathrm{light},E)^\cont $
is initial.	
We denote the unique morphism by $\alpha:\Shv(-;\Sp)\to D(-).$
\end{thm}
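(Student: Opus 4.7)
The plan is to reduce the initiality statement on $(\CondAn^\light, E)$ to Zhu's initiality statement on $\ProFin^\light$ (Theorem \ref{Zhu25 4.18}) via the adjunction of Corollary \ref{adjunction profinite and cond}. Writing $L$ for the fully faithful left adjoint $6\mathrm{FF}(\ProFin^\light) \to 6\mathrm{FF}(\CondAn^\light, E)$ and $R$ for its right adjoint (restriction), the strategy is: given $D$ on $\CondAn^\light$, restrict it to $\ProFin^\light$, invoke Zhu to produce the unique morphism there, and transport it back through the adjunction.

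First I would verify the hypotheses of Theorem \ref{Zhu25 4.18} for $D|_{\ProFin^\light}$. Condition (a) transfers verbatim. For the hypercomplete sheaf condition, I would use that any hypercover $S_\bullet \to S$ of light profinite sets presents $S$ as a colimit $S \simeq \mathrm{colim}_{[n] \in \Delta^{\op}} S_n$ in $\CondAn^\light$ (by the very definition of light condensed anima); condition (b) of Theorem \ref{initial} then forces $D(S) \simeq \lim_{[n] \in \Delta} D(S_n)$, which is precisely the hypercomplete sheaf condition on $\ProFin^\light$. Theorem \ref{Zhu25 4.18} then produces a unique morphism $\alpha_0 : \Shv(-;\Sp)|_{\ProFin^\light} \to D|_{\ProFin^\light}$ in $6\mathrm{FF}(\ProFin^\light)$.

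Next I would identify $\Shv(-;\Sp)$ on $\CondAn^\light$ with $L(\Shv(-;\Sp)|_{\ProFin^\light})$. Since $L$ is fully faithful, the unit induces an equivalence $\Shv|_{\ProFin^\light} \xrightarrow{\sim} L(\Shv|_{\ProFin^\light})|_{\ProFin^\light}$, so $L(\Shv|_{\ProFin^\light})$ is a $6$-functor formalism on $(\CondAn^\light, E)$ extending $\Shv|_{\ProFin^\light}$; by the uniqueness clause of Heyer--Mann's extension theorem, it must agree with $\Shv|_{\CondAn^\light}$. Combined with the adjunction, this yields
$$\Map_{6\mathrm{FF}(\CondAn^\light, E)}(\Shv(-;\Sp), D) \simeq \Map_{6\mathrm{FF}(\ProFin^\light)}(\Shv|_{\ProFin^\light}, D|_{\ProFin^\light}),$$
whose right-hand side is contractible with contraction point $\alpha_0$. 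This produces the unique morphism $\alpha : \Shv(-;\Sp) \to D$ on $\CondAn^\light$, with uniqueness following directly from Zhu's uniqueness via the adjunction bijection.

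The hardest step will be the identification $L(\Shv|_{\ProFin^\light}) \simeq \Shv|_{\CondAn^\light}$: Heyer--Mann's ``unique extension'' is pinned down by specific descent conditions, and one must check that the abstract left Kan extension $L(\Shv|_{\ProFin^\light})$ meets them rather than just being some extension restricting to $\Shv|_{\ProFin^\light}$. A natural route is to exploit that every object of $\CondAn^\light$ admits a $\Shv(-;\Sp)^!$-cover by light profinite sets (cf.\ the final clause of Heyer--Mann's extension theorem), so that any 6-functor formalism on $(\CondAn^\light, E)$ is rigidly determined by its restriction to $\ProFin^\light$; this forces the two candidate extensions to coincide and completes the argument.
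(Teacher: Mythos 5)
Your proposal is correct and rests on the same reduction as the paper: Corollary \ref{adjunction profinite and cond} plus Zhu's initiality result, Theorem \ref{Zhu25 4.18}. The difference is in how initiality is transported across the adjunction. The paper asserts, without further argument, that the formalisms satisfying (a) and (b) are \emph{exactly} the essential image of $6\mathrm{FF}(\ProFin^\light)^\cont$ under the fully faithful left adjoint $L$, and then moves the initial object along the resulting equivalence of full subcategories. You instead use only the adjunction equivalence $\Map(L(\Shv|_{\ProFin^\light}),D)\simeq \Map(\Shv|_{\ProFin^\light},D|_{\ProFin^\light})$, which needs two one-directional facts: that restriction carries (a) and (b) into Zhu's hypotheses --- your observation that (b) yields the hypercomplete-sheaf condition because a hypercover $S_\bullet\to S$ exhibits $S$ as $\mathrm{colim}_{[n]\in\Delta^{\op}}S_n$ in the hypercomplete topos $\CondAn^\light$ is exactly right, and is a check the paper does not spell out --- and that $\Shv(-;\Sp)$ on $(\CondAn^\light,E)$ agrees with $L(\Shv|_{\ProFin^\light})$. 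That identification is implicitly needed by the paper as well (it is contained in its essential-image claim); with the uniqueness clause of \cite[3.4.11]{HM24} as quoted in the paper, your argument via the unit of the fully faithful adjunction settles it, and your caveat that one should confirm the left Kan extension meets the descent conditions pinning down the Heyer--Mann extension (e.g.\ via $\Shv(-;\Sp)^!$-covers by light profinite sets) is the right point to flag. So your route proves slightly less than the paper claims in passing (no characterization of the essential image) but is enough for the theorem, and it replaces the paper's unargued assertion by verifiable one-way statements. The only small omission, shared with the paper, is the remark that $\Shv(-;\Sp)$ itself satisfies (a) and (b), so that it is genuinely an object of $6\mathrm{FF}(\CondAn^\light,E)^\cont$: (a) is the profinite statement, and (b) is built into the Heyer--Mann extension.
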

\begin{proof}
By Corollary \ref{adjunction profinite and cond},
we have the fully faithful functor
$$	6\mathrm{FF}( \ProFin^\light)
\hookrightarrow 	6\mathrm{FF}(  \CondAn^\light,E ).
$$
Note that six functor formalisms
$D:\mathrm{Corr}(\CondAn^\mathrm{light},E)\to
{\Pr}^L_{\kappa} $
satisfying (a) and (b) exactly
lie in the essential image of
$	6\mathrm{FF}( \ProFin^\light)^\mathrm{cont}$
under this fully faithful functor.
	By Theorem \ref{Zhu25 4.18}, we know that 
$\Shv(-;\Sp)\in 	6\mathrm{FF}( \ProFin^\light)^\cont $
is initial.
Thus we can conclude that
$\Shv(-;\Sp)\in 6\mathrm{FF}(  \CondAn^\light,E ) $
is initial among all six functor formalisms
satisfying (a) and (b).
\end{proof}

\begin{rem}
From the proof of Theorem \ref{initial},
we know that there is an equivalence
	$$6\mathrm{FF}( \ProFin^\light)^\cont
\stackrel{\sim}{\to} 6\mathrm{FF}(  \CondAn^\light,E )^\cont.$$

\end{rem}

\begin{cor}
	Let $\mathscr{D}$ be a dualizable  $\infty$-category.
	The six functor formalism
	$$\Shv(-;\D):\mathrm{Corr}(\CondAn^\mathrm{light},E)\to
	{\Pr}^L_{\kappa} $$
	is initial among all six functor formalisms
	$D:\mathrm{Corr}(\CondAn^\mathrm{light},E)\to
	{\Pr}^L_{\kappa} $ satisfying:
	\begin{itemize}
		\item [(a)]
		For any light profinite set $S$,
		the $\infty$-category $D(S)$ is dualizable
		and the family $\{s^*:D(S)\to D(*)\}_{s\in S}$
		is jointly conservative.
		\item [(b)]
		The functor $D:\CondAn^{\mathrm{light},\op}\to 	{\Pr}^L_{\kappa}$
		preserves limits.
		\item [(c)] For each light condensed anima $X$,
		the category $D(X)$ is an algebra over $\D$.
	\end{itemize}
\end{cor}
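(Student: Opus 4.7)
The plan is to bootstrap from Theorem \ref{initial} by promoting everything to a $\D$-linear setting. Condition (c) amounts to refining $D$ to a lax symmetric monoidal functor
$$\mathrm{Corr}(\CondAn^\light,E)^\otimes \longrightarrow \Mod_\D(\Pr^L_\kappa)^\otimes,$$
so the subcategory $6\mathrm{FF}(\CondAn^\light,E)^{\cont,\D}$ of six functor formalisms satisfying (a), (b), (c) should be equivalent to $\Mod_\D\bigl(6\mathrm{FF}(\CondAn^\light,E)^\cont\bigr)$, where $\D$ acts through its tautological algebra structure in $\Pr^L_\kappa$.

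Next, I would use the free--forgetful adjunction
\begin{center}
\begin{tikzcd}
6\mathrm{FF}(\CondAn^\light,E)^\cont \ar[r,shift left,"{-\otimes\D}"]
&
\Mod_\D\bigl(6\mathrm{FF}(\CondAn^\light,E)^\cont\bigr),\ar[l,shift left]
\end{tikzcd}
\end{center}
in which the left adjoint sends $F$ to $F\otimes\D$. Since left adjoints preserve initial objects, and $\Shv(-;\Sp)$ is initial on the source by Theorem \ref{initial}, the object $\Shv(-;\Sp)\otimes\D$ is initial on the target.

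It remains to identify $\Shv(-;\Sp)\otimes\D$ with $\Shv(-;\D)$ as six functor formalisms on $(\CondAn^\light,E)$. On a light profinite set $S$, Construction \ref{cons of alpha} exhibits $\Shv(S;\Sp)$ as generated under colimits by $i_{U!}1$ for open immersions $i_U:U\hookrightarrow S$, and tensoring with $\D$ over $\Sp$ produces the analogous generators of $\Shv(S;\D)$, yielding a natural equivalence compatible with $f^*$, $f_!$, and $\otimes$. For general $X\in\CondAn^\light$, the extension of the six functor formalism from $\ProFin^\light$ provided by \cite[3.4.11]{HM24} is built by Kan extension and descent, both of which commute with $-\otimes\D$ in $\Pr^L_\kappa$ since $\D$ is dualizable; hence the equivalence propagates to all of $(\CondAn^\light,E)$.

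The main obstacle will be making the $\D$-linear framework precise enough to justify the first step: one must show that condition (c) corresponds to a coherent lift into $\Mod_\D(\Pr^L_\kappa)$ over the entire correspondence category, rather than merely to an unrelated $\D$-action on each $D(X)$. Once this categorical identification is in place, the adjunction argument and the identification $\Shv(-;\Sp)\otimes\D\simeq\Shv(-;\D)$ combine to give the result.
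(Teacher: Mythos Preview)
Your proposal is essentially correct and matches the paper's approach. The paper's own proof is a one-line citation---``Combine the proof in Theorem \ref{initial} and \cite[4.21]{Zhu25}''---and what you have written is precisely an unpacking of that citation: interpret condition (c) as a lift to $\Mod_\D(\Pr^L_\kappa)$, use the free--forgetful adjunction $(-\otimes\D,\text{forget})$ to transport the initial object, and identify $\Shv(-;\Sp)\otimes\D\simeq\Shv(-;\D)$ using dualizability of $\D$.

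The only structural difference is that the paper first reduces to $\ProFin^\light$ via Corollary \ref{adjunction profinite and cond} (as in the proof of Theorem \ref{initial}) and then invokes \cite[4.21]{Zhu25} at that level, whereas you run the $\D$-module argument directly on $(\CondAn^\light,E)$. Both routes are fine; yours avoids the reduction step but must then argue that the Heyer--Mann extension \cite[3.4.11]{HM24} commutes with $-\otimes\D$, which you correctly attribute to dualizability of $\D$. Your own caveat about making condition (c) coherent over $\mathrm{Corr}$ is the right place to be careful, but this is exactly the content absorbed into \cite[4.21]{Zhu25}.
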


\begin{proof}
Combine the proof in Theorem \ref{initial}
and \cite[4.21]{Zhu25}.
\end{proof}

Let $\Pr^\dual \subset \Pr^L$
be the full subcategory of $\Pr^L$
spanned by dualizable $\infty$-categories.	
	
\begin{prop}
	There is an adjunction
		\begin{center}
		\begin{tikzcd}
			\CAlg(\Pr^\dual)
			\ar[r,shift left] &
			6\mathrm{FF}(  \CondAn^\light,E )^\cont,
			\ar[l,shift left]
		\end{tikzcd}
	\end{center}
where the left adjoint is given by
$\C \mapsto \Shv(-;\C)$	
and the right adjoint is given by $D\mapsto D(*)$.
\end{prop}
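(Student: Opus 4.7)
The plan is to extract the adjunction as a formal consequence of the preceding corollary: its universal property will let us identify $\Map_{6\mathrm{FF}^\cont}(\Shv(-;\C), D)$ with $\Map_{\CAlg(\Pr^\dual)}(\C, D(*))$ fiber by fiber, once one recognizes that $\C$-algebra structures on a 6FF $D$ correspond to $\CAlg$-maps $\C \to D(*)$.

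Before invoking the corollary I would verify well-definedness of both functors. For $\C \in \CAlg(\Pr^\dual)$, the Heyer--Mann extension of $\Shv(-;\C)$ is a 6FF on $(\CondAn^\light, E)$ and satisfies conditions (a) and (b) of Theorem~\ref{initial}: dualizability and point-conservativity on light profinite sets follow from those of $\Shv(-;\Sp)$ together with dualizability of $\C$, and limit preservation is automatic from the construction. For $D \in 6\mathrm{FF}(\CondAn^\light,E)^\cont$, the $\infty$-category $D(*)$ is dualizable by (a) applied to $S=*$ and carries a canonical $\CAlg$-structure from the lax symmetric monoidal structure of $D$, so $D(*) \in \CAlg(\Pr^\dual)$.

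Write $\mathrm{ev}_* \colon 6\mathrm{FF}(\CondAn^\light,E)^\cont \to \CAlg(\Pr^\dual)$ for the candidate right adjoint. Fixing $\C$, I would form the comma $\infty$-category $\C \downarrow \mathrm{ev}_*$, whose objects are pairs $(D, f\colon \C \to D(*))$. Because $D(X)$ is a module over $D(*)$ via $p_X^*$, and this $D(*)$-linear structure is coherent across the 6FF, equipping $D$ with a $\C$-algebra structure as in condition (c) of the previous corollary amounts to choosing a $\CAlg$-map $f\colon \C \to D(*)$. Hence the previous corollary asserts that $(\Shv(-;\C), \mathrm{id}_\C)$ is initial in $\C \downarrow \mathrm{ev}_*$. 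For any $(D,f)$, the mapping space
\[
\Map_{\C \downarrow \mathrm{ev}_*}\bigl((\Shv(-;\C),\mathrm{id}_\C),(D,f)\bigr)
\]
is, by the standard formula for mapping spaces in a comma category, the fiber over $f$ of the map
\[
\mathrm{ev}_* \colon \Map_{6\mathrm{FF}^\cont}(\Shv(-;\C), D) \longrightarrow \Map_{\CAlg(\Pr^\dual)}(\C, D(*)).
\]
Initiality makes each such fiber contractible, so $\mathrm{ev}_*$ is an equivalence on mapping spaces; this is precisely the desired adjunction $\Shv(-;-) \dashv \mathrm{ev}_*$. The unit is the identification $\C \simeq \Shv(*;\C)$, and the counit $\Shv(-;D(*)) \to D$ is the canonical morphism from the previous corollary applied to $D$ with its tautological $D(*)$-algebra structure.

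The main obstacle lies in the identification used in the third paragraph: rigorously showing that a $\C$-algebra structure on the 6FF $D$ (condition (c) of the previous corollary) is equivalent as a datum to a $\CAlg$-map $\C \to D(*)$. This requires that the pullbacks $p_X^* \colon D(*) \to D(X)$ are $\CAlg$-maps compatible with the tensor structure of $D$, so that a single map $\C \to D(*)$ propagates uniquely and coherently to $\C$-algebra structures on each $D(X)$. Once this identification is formalized, the rest of the argument is purely formal manipulation of mapping spaces.
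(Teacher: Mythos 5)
Your overall skeleton (fiberwise initial objects in the comma $\infty$-categories $\C \downarrow \mathrm{ev}_*$ give the left adjoint) is sound, but the argument hinges on exactly the step you defer, and that step is a genuine gap rather than a routine formalization. The preceding corollary, as stated, asserts initiality of $\Shv(-;\C)$ in a \emph{full} subcategory of $6\mathrm{FF}(\CondAn^\light,E)$ cut out by conditions (a)--(c), where (c) (``$D(X)$ is an algebra over $\C$'') is treated as a property; its conclusion is contractibility of $\Map_{6\mathrm{FF}}(\Shv(-;\C),D)$ for such $D$. What your comma-category argument needs is different and stronger: for \emph{every} $D \in 6\mathrm{FF}(\CondAn^\light,E)^\cont$ (no $\C$-linearity assumed) and every point $f \in \Map_{\CAlg(\Pr^\dual)}(\C,D(*))$, the homotopy fiber of $\mathrm{ev}_*\colon \Map_{6\mathrm{FF}^\cont}(\Shv(-;\C),D) \to \Map_{\CAlg}(\C,D(*))$ over $f$ must be contractible. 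Passing between these requires (i) an equivalence between the datum of a coherent $\C$-linear structure on the 6-functor formalism $D$ and a single $\CAlg$-map $\C \to D(*)$, i.e.\ that $6\mathrm{FF}$'s valued in $\Mod_\C(\Pr^L_\kappa)$ are the same as 6FF's $D$ together with $\C \to D(*)$, and (ii) that the corollary's initiality is initiality in the category of $\C$-linear morphisms, so that it computes the fiber rather than the total mapping space. Neither is established in the paper's corollary nor in your proposal, and without them the argument does not close; note also that for $D$ not admitting any $\C$-structure (e.g.\ $\C=\Mod_{\mathbb{Q}}$, $D=\Shv(-;\Sp)$) the total mapping space is empty, which shows the fiberwise statement really is the one carrying content.

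The paper takes a shorter and different route: it does not redo any universal-property analysis on $\CondAn^\light$ at all, but quotes the already-established adjunction $\CAlg(\Pr^\dual) \rightleftarrows 6\mathrm{FF}^\cont$ on the profinite/locally compact level from \cite[4.19]{Zhu25} and transports it along the equivalence $6\mathrm{FF}(\ProFin^\light)^\cont \simeq 6\mathrm{FF}(\CondAn^\light,E)^\cont$ furnished by Theorem \ref{initial} (via the fully faithful left adjoint of Corollary \ref{adjunction profinite and cond}). If you want to keep your intrinsic argument, you would essentially have to reprove the structured ($\C$-linear) version of the initiality statement, which is what Zhu's result already packages; otherwise the cleanest fix is to cite it and transport, as the paper does.
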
	
\begin{proof}
	It follows from Theorem \ref{initial}
	and \cite[4.19]{Zhu25}.
\end{proof}

\begin{rem}
From the adjunction
	\begin{tikzcd}
	\CAlg(\Pr^\dual)
	\ar[r,shift left] &
	6\mathrm{FF}(  \CondAn^\light,E )^\cont
	\ar[l,shift left]
\end{tikzcd},
we get the counit
$\epsilon: \Shv(-;D(*))\to D(-) $
in 	$	6\mathrm{FF}(  \CondAn^\light,E )^\cont$.
\end{rem}

	\begin{cor} \label{factorization}
		Let $D\in 6\mathrm{FF}(\CondAn^\light,E)^\cont$. 
		The map
		$\alpha:\Shv(-;\Sp)\to D(-)$ factors through
		\begin{center}
			\begin{tikzcd}
				\Shv(-;\Sp)\ar[rr,"\alpha"]\ar[rd]
				& & D(-)  \\
				& \Shv(-;D(*))\ar[ru,"\epsilon"swap] .&
			\end{tikzcd}
		\end{center}
	\end{cor}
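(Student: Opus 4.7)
The plan is to invoke the initiality of $\Shv(-;\Sp)$ in $6\mathrm{FF}(\CondAn^\light,E)^\cont$ from Theorem \ref{initial}: the mapping space $\Map_{6\mathrm{FF}^\cont}(\Shv(-;\Sp),D)$ is contractible, so any two morphisms $\Shv(-;\Sp)\to D(-)$ built inside that category are canonically equivalent. It therefore suffices to produce one morphism of the form $\epsilon\circ \varphi$ with $\varphi:\Shv(-;\Sp)\to \Shv(-;D(*))$, and identify it with $\alpha$ by uniqueness.

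To produce $\varphi$ I would use that $\Sp$ is the initial object of $\CAlg(\Pr^\dual)$. Indeed $\Sp$ is dualizable (it is rigid) and is the unit for the Lurie tensor product on $\Pr^L$, so it is initial in $\CAlg(\Pr^L)$; since $\Pr^\dual\subset \Pr^L$ is the full subcategory of dualizable presentables and contains this unit, $\Sp$ remains initial in $\CAlg(\Pr^\dual)$. Hence there is an essentially unique morphism $\iota:\Sp \to D(*)$ in $\CAlg(\Pr^\dual)$. Applying the left adjoint $\C\mapsto \Shv(-;\C)$ of the preceding proposition to $\iota$ then yields $\varphi:=\Shv(-;\iota):\Shv(-;\Sp)\to \Shv(-;D(*))$ inside $6\mathrm{FF}(\CondAn^\light,E)^\cont$.

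Composing $\varphi$ with $\epsilon:\Shv(-;D(*))\to D(-)$ produces a morphism $\Shv(-;\Sp)\to D(-)$ in $6\mathrm{FF}(\CondAn^\light,E)^\cont$, which by Theorem \ref{initial} must agree with $\alpha$. This is the desired factorization. There is no real obstacle: once one has the adjunction of the preceding proposition together with the initiality from Theorem \ref{initial}, the corollary is a direct diagrammatic consequence. Equivalently and more conceptually, the factorization is nothing but the unit-counit decomposition of $\alpha$ under the adjunction $\Shv(-;-)\dashv \mathrm{ev}_{*}$, whose transpose is the initial map $\iota$.
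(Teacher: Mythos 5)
Your argument is correct, but it runs along a slightly different track than the paper's. The paper's proof is a one-step adjunction argument: it applies the naturality square of the counit of the adjunction $\CAlg(\Pr^\dual)\rightleftarrows 6\mathrm{FF}(\CondAn^\light,E)^\cont$ to the morphism $\alpha$ itself, and uses that the counit at $\Shv(-;\Sp)$ is an equivalence (since $\Shv(*;\Sp)\simeq\Sp$); this produces the factorization directly and identifies the diagonal arrow concretely as $\Shv(-;\alpha_*)$, where $\alpha_*:\Sp\to D(*)$ is the transpose of $\alpha$ --- no appeal to uniqueness is needed. You instead manufacture a candidate factorization from the initiality of $\Sp$ in $\CAlg(\Pr^\dual)$ (which is legitimate here, since the paper takes $\Pr^\dual\subset\Pr^L$ to be a \emph{full} subcategory closed under the tensor product and containing the unit, so $\Sp$ stays initial in $\CAlg(\Pr^\dual)$), and then use the contractibility of $\Map(\Shv(-;\Sp),D)$ from Theorem \ref{initial} to identify $\epsilon\circ\varphi$ with $\alpha$. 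This buys you a proof that never unwinds what the counit does, at the cost of invoking two initiality statements; the paper's route is more economical and pins down the diagonal map explicitly. Your closing remark about the unit--counit decomposition of $\alpha$ under $\Shv(-;-)\dashv \mathrm{ev}_*$ is in fact precisely the paper's argument, so the two proofs agree up to which presentation one takes as primary.
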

\begin{proof}
	Applying counit to the map
	$\alpha:\Shv(-;\Sp)\to D(-)$, we get the commutative square
	\begin{center}
		\begin{tikzcd}
			\Shv(-;\Sp)\ar[r,"\sim"]\ar[d]
			& \Shv(-;\Sp)\ar[d,"\alpha"]\\
			\Shv(-;D(*))\ar[r,"\epsilon"]
			& D(-).
		\end{tikzcd}
	\end{center}
	Then it follows from the above square.
\end{proof}

\begin{cor} \label{factorization of epsilon}
	Let $D\in 6\mathrm{FF}(\CondAn^\light,E)^\cont$.
	The map $\epsilon: \Shv(-;D(*))\to D(-) $
	factors as
	$$
	\Shv(-;D(*)) 
	\simeq \Shv(-;\Sp)\otimes D(*)
	\stackrel{\alpha\otimes \mathrm{id}}{\longrightarrow}
	 D(-)\otimes D(*)\longrightarrow  D(-)
	$$
	in 	$6\mathrm{FF}(  \CondAn^\light,E )$.
\end{cor}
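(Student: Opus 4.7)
The plan is to identify the composite on the right-hand side of the displayed diagram with the counit $\epsilon$ by invoking the universal property of the adjunction from the preceding Proposition. Since $\epsilon$ is characterized as the unique morphism $\Shv(-;D(*)) \to D(-)$ in $6\mathrm{FF}(\CondAn^\light, E)^\cont$ whose adjunct under $\mathrm{ev}_*$ is $\mathrm{id}_{D(*)}$, it suffices to verify that the proposed composite also corresponds to $\mathrm{id}_{D(*)}$.

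I would begin by establishing the equivalence $\Shv(-;D(*)) \simeq \Shv(-;\Sp) \otimes D(*)$ inside $6\mathrm{FF}(\CondAn^\light, E)^\cont$. Pointwise this is the standard base change formula $\Shv(X;\C) \simeq \Shv(X;\Sp) \otimes \C$ for $\C$ presentable, and I would check that this pointwise equivalence upgrades to a map of commutative algebras in $\Fun(\Corr(\CondAn^\light, E), \Pr^L_\kappa)$, hence to an equivalence in $6\mathrm{FF}$.

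Next, I would construct and interpret the second arrow $D(-) \otimes D(*) \to D(-)$ as an action map. The lax symmetric monoidal structure on $D$ endows each $D(X)$ with a canonical $D(*)$-module structure through the constraint $D(*) \otimes D(X) \to D(* \times X) \simeq D(X)$, and these assemble into a $D(*)$-action on $D(-)$ in $6\mathrm{FF}$. Composing with $\alpha \otimes \mathrm{id}$ then produces a well-defined morphism $\Shv(-;\Sp) \otimes D(*) \to D(-)$ of six functor formalisms.

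To finish, I would apply $\mathrm{ev}_*$ to this composite and observe that the result is
$$
\Sp \otimes D(*) \xrightarrow{\alpha_* \otimes \mathrm{id}} D(*) \otimes D(*) \xrightarrow{m} D(*),
$$
which equals $\mathrm{id}_{D(*)}$: the map $\alpha_*:\Sp \to D(*)$ is forced to be the algebra unit since $\Sp$ is initial in $\CAlg(\Pr^\dual)$, and composing the unit with multiplication is the identity. By uniqueness of the counit, this composite agrees with $\epsilon$. The main obstacle lies in carefully upgrading the pointwise identification $\Shv(-;D(*)) \simeq \Shv(-;\Sp) \otimes D(*)$ and the action $D(-) \otimes D(*) \to D(-)$ from the level of $\Pr^L_\kappa$ to genuine morphisms in the algebra category $6\mathrm{FF}(\CondAn^\light, E)$; once that compatibility is secured, the remainder of the argument is a formal manipulation of the adjunction.
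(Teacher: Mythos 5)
Your strategy is genuinely different from the paper's. You work directly on $\CondAn^\light$: you characterize $\epsilon$ by its adjunct under the adjunction $\CAlg(\Pr^\dual)\rightleftarrows 6\mathrm{FF}(\CondAn^\light,E)^\cont$ and then check that the displayed composite has adjunct $\mathrm{id}_{D(*)}$. The paper never argues on $\CondAn^\light$ at all: it restricts $D$ to light profinite sets, quotes \cite[3.11]{Zhu25} and \cite[4.19]{Zhu25}, which give exactly this factorization of the counit in $6\mathrm{FF}(\ProFin^\light)^\cont$, and transports it along the fully faithful extension functor $6\mathrm{FF}(\ProFin^\light)\to 6\mathrm{FF}(\CondAn^\light,E)$ of Corollary \ref{adjunction profinite and cond}. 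Your endgame is fine: both source and target of the composite lie in the full subcategory $6\mathrm{FF}(\CondAn^\light,E)^\cont$, so the counit characterization applies, and $\alpha_*\colon\Sp\to D(*)$ is indeed the unit map (you do not even need initiality of $\Sp$ in $\CAlg(\Pr^\dual)$, which would require stability as a hypothesis; it is cleaner to note that $\alpha_*$ is a $\CAlg({\Pr}^L_\kappa)$-map out of the idempotent algebra $\Sp$, hence unique and exhibiting $D(*)$ as stable), so unitality gives that the adjunct is the identity, provided the equivalence $\Shv(-;D(*))\simeq\Shv(-;\Sp)\otimes D(*)$ restricts at the point to the canonical one.

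The genuine gap is the step you yourself flag and defer: upgrading the pointwise identifications $\Shv(X;D(*))\simeq\Shv(X;\Sp)\otimes D(*)$ and the pointwise action $D(X)\otimes D(*)\to D(X)$ to morphisms in $6\mathrm{FF}(\CondAn^\light,E)\simeq\CAlg(\Fun(\Corr(\CondAn^\light,E),{\Pr}^L_\kappa)^\otimes)$. This is not a routine verification; it is the substance of the statement. A morphism of six functor formalisms is a natural transformation of lax symmetric monoidal functors out of $\Corr(\CondAn^\light,E)$, which encodes compatibility with $f_!$ for all $!$-able maps (projection formula) together with an infinite hierarchy of coherences, so ``check pointwise and assemble'' is not an argument at this level. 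Moreover, $\Shv(-;D(*))$ on light condensed anima is itself only available through the Heyer--Mann extension from $\ProFin^\light$, so on a general light condensed anima there is no ready-made pointwise formula to compare against. This is precisely what the paper's route buys: the two morphisms and the factorization of $\epsilon$ already exist at the profinite level by \cite[3.11]{Zhu25} and \cite[4.19]{Zhu25}, and applying the symmetric monoidal, fully faithful left adjoint of Corollary \ref{adjunction profinite and cond} produces them on $\CondAn^\light$ with no further coherence work. To complete your argument you should either obtain these two morphisms by the same extension mechanism or cite them; once they are in hand, your adjunct computation does correctly identify the composite with $\epsilon$.
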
	
	\begin{proof}
		Restricting $D$ to light profinite sets,
		by \cite[3.11]{Zhu25}
		and \cite[4.19]{Zhu25},
		we know that the counit
		$\epsilon: \Shv(-;D(*))\to D(-) $
		in $	6\mathrm{FF}( \ProFin^\light)^\mathrm{cont}$
		has the factorization
		$$
	\Shv(-;D(*)) 
	\simeq \Shv(-;\Sp)\otimes D(*)
	\stackrel{\alpha\otimes \mathrm{id}}{\longrightarrow}
	D(-)\otimes D(*)\longrightarrow  D(-).
	$$	
Thus what we want to show is followed by applying the functor
	$	6\mathrm{FF}( \ProFin^\light)
	\to	6\mathrm{FF}(  \CondAn^\light,E )$
to this factorization.		
	\end{proof}

	\begin{rem} \label{commutes with * & !}
		If $\alpha:D\to D^\prime $
		is a morphism in 
		$6\mathrm{FF}(\CondAn^\mathrm{light},E)$,
	then:
		\begin{itemize}
			\item [(1)] for any map $f:Y\to X$ of light condensed anima, there is a commutative square
			\begin{center}
				\begin{tikzcd}
					D(X)\ar[r,"\alpha_X"]\ar[d,"f^*"]
					& D^\prime(X)\ar[d,"f^*"]\\
					D(Y)\ar[r,"\alpha_Y"]
					& D^\prime (Y).
				\end{tikzcd}
			\end{center}
			\item [(2)] for any map $f:Y\to X$ in $E$, there is a commutative square
			\begin{center}
				\begin{tikzcd}
					D(Y)\ar[d,"f_!"]\ar[r,"\alpha_Y"]
					& D^\prime(Y) \ar[d,"f_!"]\\
					D(X)\ar[r,"\alpha_X"] & D^\prime (X).
				\end{tikzcd}
			\end{center}
		\end{itemize}
	\end{rem}

	\begin{rem} \label{commute with pullback}
		Let $X$ be a light condensed anima.
		We write $X\simeq \mathrm{colim}_i S_i$
		as the colimit of light profinite sets $S_i$.
		Suppose the maps $f_i:S_i \to X$
		and transition maps $f_{ij}:S_i \to S_j$.
		We have the commutative square
		\begin{center}
			\begin{tikzcd}
				\Shv(X;\Sp)\ar[d,"f_i^*"]\ar[r,"\alpha_X"]
				& D(X)\ar[d,"f_i^*"]\\
				\Shv(S_i;\Sp)\ar[r,"\alpha_{S_i}"]
				& D(S_i).
			\end{tikzcd}
		\end{center}
	\end{rem}

	\begin{cor} \label{preserves colimits and unit}
		Let $X$ be a light condensed anima
		and $D\in 6\mathrm{FF}(\CondAn^\light,E)^\cont$.
		\begin{itemize}
			\item [(1)]The functor $\alpha_X:\Shv(X;\Sp)\to D(X)$
			preserves all colimits.
			\item [(2)]We have 
			$ \alpha_X 1_{\Shv(X;\Sp) }\simeq 1_{D(X)}$.
			\item [(3)]
			For any $A,B\in \Shv(X;\Sp)$,
			we have 
			$\alpha_X(A)\otimes \alpha_X(B)\simeq \alpha_X (A\otimes B)$.
		\end{itemize}
	\end{cor}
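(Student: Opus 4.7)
The plan is to deduce part (1) formally from $\alpha_X$ being a morphism in ${\Pr}^L_\kappa$, and to reduce parts (2) and (3) to the light profinite case (Remark \ref{sym mon for profinite set case}) by using condition (b) to express $X$ as a colimit of light profinite sets.

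Part (1) is essentially tautological: $\alpha_X$ is the value at $X$ of the natural transformation $\alpha$ between functors $\Corr(\CondAn^\light, E) \to {\Pr}^L_\kappa$, so it is a morphism in ${\Pr}^L_\kappa$; by definition every such morphism is a colimit-preserving functor between presentable $\infty$-categories.

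For parts (2) and (3), I would first write $X \simeq \mathrm{colim}_i S_i$ as a colimit of light profinite sets $S_i$, which is possible because $\CondAn^\light$ is generated under colimits by representables. Applying condition (b) to both $\Shv(-;\Sp)$ and $D$ then yields equivalences $\Shv(X;\Sp)\simeq \lim_i \Shv(S_i;\Sp)$ and $D(X)\simeq \lim_i D(S_i)$ in ${\Pr}^L_\kappa$, with projection functors the pullbacks $f_i^*$ along the structure maps $f_i: S_i \to X$. Consequently, an object of $D(X)$ is determined up to equivalence by its compatible family of images under the $f_i^*$.

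For (2), pullback functors are symmetric monoidal, so $f_i^*(1_{\Shv(X;\Sp)})\simeq 1_{\Shv(S_i;\Sp)}$ and $f_i^*(1_{D(X)}) \simeq 1_{D(S_i)}$; combining Remark \ref{commute with pullback} with Remark \ref{sym mon for profinite set case} then gives
$$f_i^*\alpha_X(1_{\Shv(X;\Sp)}) \simeq \alpha_{S_i}(1_{\Shv(S_i;\Sp)}) \simeq 1_{D(S_i)} \simeq f_i^*(1_{D(X)}),$$
so $\alpha_X(1_{\Shv(X;\Sp)})$ and $1_{D(X)}$ correspond to equivalent compatible systems in $\lim_i D(S_i)$. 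Part (3) is analogous: for $A, B \in \Shv(X;\Sp)$, one gets
$$f_i^*\bigl(\alpha_X(A)\otimes \alpha_X(B)\bigr) \simeq \alpha_{S_i}(f_i^*A \otimes f_i^*B) \simeq f_i^*\alpha_X(A\otimes B),$$
using symmetric monoidality of $f_i^*$ and the profinite case. The main (and essentially only) obstacle is bookkeeping: ensuring that the pointwise equivalences above are natural in $i$ so that they assemble into equivalences in the limit. This follows from naturality of $\alpha$ together with symmetric monoidality of $f_i^*$, and is routine given the formalism developed above.
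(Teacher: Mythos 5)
Your argument for (1) is fine, and in fact more direct than the paper's: since $\alpha$ is a morphism in $6\mathrm{FF}(\CondAn^\light,E)^\cont$, its component $\alpha_X$ is a morphism of ${\Pr}^L_\kappa$ and hence preserves colimits; the paper instead reduces to the profinite case using the jointly conservative family $\{f_i^*\}$ together with Remark \ref{commute with pullback}. Your reduction of (2) and (3) to Remark \ref{sym mon for profinite set case} via the presentation $X\simeq \mathrm{colim}_i S_i$ and condition (b) is also the same starting point as the paper's proof.

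The gap is in the final step of (2) and (3). From $D(X)\simeq \lim_i D(S_i)$ you assert that an object of $D(X)$ is ``determined up to equivalence by its compatible family of images under the $f_i^*$'' and then conclude $\alpha_X 1_{\Shv(X;\Sp)}\simeq 1_{D(X)}$ (resp.\ $\alpha_X(A)\otimes\alpha_X(B)\simeq \alpha_X(A\otimes B)$) from objectwise equivalences after applying each $f_i^*$. In an $\infty$-categorical limit this inference is not valid as stated: an object of $\lim_i D(S_i)$ is a \emph{coherently} compatible family, and two objects whose projections are equivalent for each $i$ separately need not be equivalent; joint conservativity of $\{f_i^*\}$ only detects whether a \emph{given} map in $D(X)$ is an equivalence, it does not produce one. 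Calling the needed compatibility ``routine bookkeeping'' skips exactly the nontrivial point. The missing ingredient is a canonical global comparison map to which conservativity can be applied: the unit map $1_{D(X)}\to \alpha_X 1_{\Shv(X;\Sp)}$ for (2) and the map $\alpha_X(A)\otimes\alpha_X(B)\to \alpha_X(A\otimes B)$ for (3), both supplied by the lax symmetric monoidal structure of $\alpha$ as a morphism in $\CAlg(\Fun(\Corr(\CondAn^\light,E),{\Pr}^L_\kappa)^\otimes)$, compatibly with the $f_i^*$. This is how the paper's proof of (3) should be read (the objectwise computation, which you reproduce correctly, is applied to this canonical map and conservativity finishes), while for (2) the paper sidesteps the gluing issue altogether by writing $\alpha_X 1_{\Shv(X;\Sp)}\simeq \lim_i f_{i*}f_i^*\alpha_X 1_{\Shv(X;\Sp)}$ and computing the limit using $f_i^*\alpha_X\simeq \alpha_{S_i}f_i^*$ and the profinite case. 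So your computations are the right ones, but the proposal as written does not identify the natural map being tested, and without it the passage from objectwise to global equivalences fails.
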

	
	\begin{proof}
		We write $X\simeq \mathrm{colim}_i S_i $
		as the colimit of light profinite sets $S_i$,
		and denote maps $f_i:S_i\to X$.
		
		For (1), since the family $\{f_i^*:D(X)\to D(S_i) \}_i$
		is jointly conservative,
		it suffices to show that the functor
		$f_i^* \alpha_X $ preserves all colimits, for each $i$.
		Indeed, by Remark \ref{commute with pullback},
		we have $f_i^* \alpha_X \simeq \alpha_{S_i}f_i^*$,
		which preserves all colimits,
		since both $f_i^*$ and $\alpha_{S_i}$ preserve all colimits.
		
		For (2), note that we have
		\begin{align*}
			\alpha_X 1_{\Shv(X;\Sp) }
			&\simeq \lim_i f_{i*} f_i^* \alpha_X 1_{\Shv(X;\Sp) }
			\simeq \lim_i f_{i*} \alpha_{S_i} f_i^* 1_{\Shv(X;\Sp) }\\
			&\simeq \lim_i f_{i*} 1_{D(S_i)}
			\simeq  \lim_i f_{i*} f_i^* 1_{D(X)}
			\simeq 1_{D(X)}, 
		\end{align*}
			where the third equivalence is by Remark
		\ref{sym mon for profinite set case}.
		
		For (3), again, it suffices to show that
		$ f_i^* (\alpha_X(A)\otimes \alpha_X (B) )
		\simeq f_i^* \alpha_X(A\otimes B) $, for each $i$.
		Indeed, we have
		\begin{align*}
			f_i^* (\alpha_X(A)\otimes \alpha_X (B) )
			&\simeq 
			f_i^* \alpha_X(A)\otimes f_i^* \alpha_X (B) 
			\simeq \alpha_{S_i} f_i^* (A) \otimes \alpha_{S_i}f_i^*(B)\\
			&\simeq \alpha_{S_i} f_i^* (A\otimes B)
			\simeq  f_i^* \alpha_X (A\otimes B),
		\end{align*}
		where the third equivalence is by Remark
		\ref{sym mon for profinite set case}.

	\end{proof}

	\section{Applications}
	In this section, we give some applications. We will show that for any light condensed anima $X$ and 
	any presentable 6-functor formalism $D:  \mathrm{Corr}(\CondAn^\mathrm{light},E)\to
	{\Pr}^L_{\kappa}$ satisfying conditions (a) and (b) in Theorem \ref{initial}, we have:
	\begin{itemize}
		\item [(1)] The functor
		$f_X^*:D(X)\to D(X\times [0,1]) $
		is fully faithful, where $f_X:X\times [0,1]\to X$ is the projection,
		\item [(2)] The functor
		$f_X^*:D(X)\to D(X\times \mathbb{R}) $
		is fully faithful, where $f_X:X\times \mathbb{R}\to X$
		is the projection.
	\end{itemize}

	We first recall the category of kernels.
	\begin{rem}
		Let $D$ be a 3-functor formalism on a geometric setup
		$(\C,E)$.
		Recall from \cite[4.1.3]{HM24},
		there is a 2-\textit{category of kernels}
		$K_{D,S}$ associated with the 3-functor formalism $D$.
		For this category of kernels $K_{D,S} $,
		we have the following descriptions:
		\begin{itemize}
			\item  The objects in $K_{D,S}$
			are the morphisms $X\to S$ in $E$.
			\item Given two objects $X,Y\in K_{D,S}$,
			the category
			$\Fun_{K_{D,S} }(Y,X)$
			of homomorphisms from $Y$ to $X$
			is given by
			$$\Fun_{K_{D,S} }(Y,X)\simeq D(X\times_S Y) .$$
			\item Given three objects $X,Y ,Z$ in $K_{D,S}$,
			and morphisms
			$M: Y\to X $, $N:Z\to Y$ in $K_{D,S}$,
			the composition
			$M\circ N:Z\to X $
			is given by
			$$
			M\circ N\simeq \pi_{XZ!}( \pi_{XY}^* M \otimes
			\pi_{YZ}^* N) \in D(X \times_S Z).
			$$
		\end{itemize}
	\end{rem}

	We then recall the concept of suave and prim objects from 
	\cite[Section 4.4]{HM24}.
	\begin{dfn}
		Let $D$ be a 3-functor formalism on a geometric setup
		$(\C,E)$ and $f:X\to S$ a morphism in $E$.
		We fix an object $P\in D(X)$.
		\begin{itemize}
			\item [(1)] We say $P$ is $f$-\textit{suave},
			if $P\in D(X)$, viewed as a morphism $X\to S$ in 
			$K_{D,S}$, is a left adjoint.
			We denote its right adjoint
			$S\to X$ by $\mathrm{SD}_f(P)\in D(X)$,
			which is called the $f$-\textit{suave dual} of $P$.
			\item [(2)] We say $P$ is $f$-\textit{prim},
			if $P\in D(X)$, viewed as a morphism $X\to S$ in 
			$K_{D,S}$, is a right adjoint.
			We denote its left adjoint
			$S\to X$ by $\mathrm{PD}_f(P)\in D(X)$,
			which is called the $f$-\textit{prim dual} of $P$.
		\end{itemize}
	\end{dfn}

	\begin{rem} \label{phi_alpha}
		Let $\alpha:D\to D^\prime$ be a morphism of six functor formalisms on a geometric setup $(\C,E)$.
		Recall from \cite[4.2.1]{HM24} that
		for any $S\in \C$, there is an induced 2-functor
		$\varphi_\alpha: K_{D,S}\to K_{D^\prime,S}$,
		which acts on objects as the identity, and on morphisms categories as
		$$ \alpha_{X\times_S Y}:
		D(X\times_S Y)\to  D^\prime(X\times_S Y).
		$$
		
		Recall from \cite[4.1.5]{HM24} that there is a 2-functor
		$ \Psi_{D,S}: \K_{D,S}\to \Cat_2$,
		which sends $X\in K_{D,S}$
		to $D(X)\in \Cat_2$,
		and a morphism $M\in \Fun_S(Y,X)\simeq D(X\times_S Y)$
		to the functor
		$$
		\Psi_{D,S}(M)= \pi_{X!}
		(M\otimes \pi_Y^*(-) ):D(Y)\to (X),
		$$
		where $\pi_X: X\times_S Y \to X$
		and $\pi_Y:X\times_S Y\to Y$.
		
		Now, for any morphism $\alpha:D\to D^\prime$
		between six functor formalisms on a geometric setup $(\C,E)$,
		there is an induced morphism
		$$ \Phi_{D,S}\to \Psi_{D^\prime,S}\circ \varphi_\alpha$$
		between the 2-functors
		from $K_{D,S}$ to $\Cat_2$.
	\end{rem}

	\begin{lem} \label{preserves SD and PD}
		Let $f:X\to S$ be a map in $E$ and
		$\alpha:D\to D^\prime$ a morphism between six functor formalisms.
		Fix an object $P\in D(X)$. We have:
		\begin{itemize}
			\item [(1)]
			If $P$ is $f$-suave, then
			$\alpha_X(P)\in D^\prime(X)$
			is $f$-suave, and
			$\alpha_X( \mathrm{SD}_f(P) )\simeq
			\mathrm{SD}_f( \alpha_X(P)). $
			\item [(2)]
			If $P$ is $f$-prim, then
			$\alpha_X(P)\in D^\prime(X)$
			is $f$-prim, and
			$\alpha_X( \mathrm{PD}_f(P) )\simeq
			\mathrm{PD}_f( \alpha_X(P)). $
		\end{itemize}
	\end{lem}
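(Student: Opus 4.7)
The plan is to reduce both statements to the general fact that any 2-functor preserves adjunctions, applied to the induced 2-functor $\varphi_\alpha\colon K_{D,S}\to K_{D^\prime,S}$ from Remark \ref{phi_alpha}.

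First I recall the setup. By definition, $P\in D(X)$ is $f$-suave precisely when the 1-morphism $P\colon X\to S$ in the 2-category $K_{D,S}$ admits a right adjoint, and $\mathrm{SD}_f(P)\in D(X)$ is that right adjoint. Similarly, $P$ is $f$-prim precisely when it admits a left adjoint, which is $\mathrm{PD}_f(P)$. Now Remark \ref{phi_alpha} supplies a 2-functor $\varphi_\alpha\colon K_{D,S}\to K_{D^\prime,S}$ that is the identity on objects and is given by $\alpha_{X\times_S Y}\colon D(X\times_S Y)\to D^\prime(X\times_S Y)$ on the Hom-category $\Fun_{K_{D,S}}(Y,X)$. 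In particular, on the relevant Hom-category $\Fun_{K_{D,S}}(X,S)\simeq D(X)$ the 2-functor $\varphi_\alpha$ agrees with $\alpha_X$.

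The main step is then purely formal. Since $\varphi_\alpha$ is a 2-functor, it preserves adjunctions between 1-morphisms: given an adjunction $P\dashv Q$ in $K_{D,S}$ with unit $\eta$ and counit $\varepsilon$ satisfying the triangle identities, the images $\varphi_\alpha(P)\dashv \varphi_\alpha(Q)$ with unit $\varphi_\alpha(\eta)$ and counit $\varphi_\alpha(\varepsilon)$ form an adjunction in $K_{D^\prime,S}$. Applying this to $P\dashv \mathrm{SD}_f(P)$ yields
\[
\alpha_X(P)\;\dashv\;\alpha_X(\mathrm{SD}_f(P))
\]
in $K_{D^\prime,S}$, so $\alpha_X(P)$ is $f$-suave and, by uniqueness of right adjoints, $\mathrm{SD}_f(\alpha_X(P))\simeq \alpha_X(\mathrm{SD}_f(P))$. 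The prim statement follows in exactly the same way by applying $\varphi_\alpha$ to the adjunction $\mathrm{PD}_f(P)\dashv P$.

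The only nontrivial input is the existence and 2-functoriality of $\varphi_\alpha$, which is exactly the content of Remark \ref{phi_alpha} (taken from \cite[4.2.1]{HM24}); once this is in hand the rest is a direct application of the fact that 2-functors send adjunctions to adjunctions. I therefore expect no real obstacle beyond citing Remark \ref{phi_alpha} and spelling out the dictionary between the definitions of $f$-suave / $f$-prim and the existence of adjoints in $K_{D,S}$.
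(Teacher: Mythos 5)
Your proposal is correct and matches the paper's own argument: both invoke the 2-functor $\varphi_\alpha\colon K_{D,S}\to K_{D^\prime,S}$ from Remark \ref{phi_alpha}, which acts as $\alpha_X$ on the relevant Hom-category, and conclude by the fact that 2-functors carry the adjunction $P\dashv \mathrm{SD}_f(P)$ (respectively $\mathrm{PD}_f(P)\dashv P$) to an adjunction in $K_{D^\prime,S}$, with uniqueness of adjoints giving the identification of the duals. No gaps; this is essentially the same proof.
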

	\begin{proof}
		The morphism $\alpha:D\to D^\prime$
		induces a 2-functor
		$\varphi_\alpha: K_{D,S}\to K_{D^\prime,S}$,
		which preserves adjunctions.
		If $P$ is $f$-suave,
		then $P\in D(X)$, viewed as a morphism
		$X\to S$ in $K_{D,S}$, is left adjoint.
		Thus by Remark \ref{phi_alpha},
		 the 2-functor $\varphi_\alpha$ sends
		the adjunction
		\begin{tikzcd}
			X \ar[r,shift left,"P"] &
			S \ar[l,shift left,"\mathrm{SD}_f(P)"]
		\end{tikzcd}
		to the adjunction
		\begin{tikzcd}
			X \ar[r,shift left,"\alpha_X(P)"] &
			S \ar[l,shift left,"\alpha_X(\mathrm{SD}_f(P))"]
		\end{tikzcd}.
		Hence $\mathrm{SD}_f( \alpha_X(P) ) 
		\simeq \alpha_X(\mathrm{SD}_f(P)) $.
		The case $P$ being $f$-prim is similar. 
	\end{proof}

	Next, we recall the definition of suave maps and prim maps
	from \cite[Section 4.5]{HM24}.
	\begin{dfn}
		Let $D$ be a three functor formalism on a geometric setup
		$(\C,E)$, and $f:X\to S$ a morphism in $E$.   
		\begin{itemize}
			\item [(1)] We say $f$ is $D$-\textit{suave},
			if $1_{D(X)}\in D(X)$ is $f$-suave.
			We denote $\omega_f:=\mathrm{SD}_f(1_{D(X)} )
			\simeq f^! 1_{D(S)}\in D(X)$,
			which is called the \textit{dualizing complex} of $f$.
			\item [(2)] We say $f$ is $D$-\textit{prim},
			if $1_{D(X)}\in D(X)$ is $f$-prim.
			We denote $D_f:=\mathrm{PD}_f(1_{D(X)} )
			\simeq p_{1*}\Delta_! 1_{D(X)} \in D(X)$,
			which is called the \textit{codualizing complex} of $f$.
		\end{itemize}
	\end{dfn}

	In Lemma \ref{preserves SD and PD},
	if we take $P\simeq 1_{D(X)}\in D(X)$,
	and take $\alpha:D\simeq \Shv(-;\Sp)\to D^\prime$
	to be the unique morphism from Theorem \ref{initial}, 
	then we get:
	\begin{cor} \label{preserves delta_f and D_f}
		Let $f:X\to S$ be a map in $E$
		and $\alpha:D\simeq \Shv(-;\Sp) \to D^\prime$ the unique morphism from Theorem \ref{initial}. We have:
		\begin{itemize}
			\item [(1)]
			If $f$ is $D$-suave, then $f$ is $D^\prime$-suave,
			and $\alpha_X \omega_f \simeq \omega_f^\prime $,
			where $\omega_f\simeq f^! 1_{D(S)}$
			and $\omega_f^\prime\simeq f^! 1_{D^\prime(S)}$.
			\item [(2)]
			If $f$ is $D$-prim, then $f$ is $D^\prime$-prim,
			and $\alpha_X D_f \simeq D_f^\prime$,
			where $D_f\simeq p_{1*}\Delta_!1_{D(X)}$
			and $D_f^\prime  \simeq p_{1*}\Delta_!1_{D^\prime(X)}.$
		\end{itemize}
	\end{cor}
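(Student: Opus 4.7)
The plan is to deduce this corollary as a direct specialization of Lemma \ref{preserves SD and PD} applied to the morphism $\alpha:\Shv(-;\Sp)\to D'$ from Theorem \ref{initial} and to the distinguished object $P=1_{D(X)}=1_{\Shv(X;\Sp)}\in D(X)$. By definition, $f:X\to S$ being $D$-suave means exactly that $P=1_{D(X)}$ is $f$-suave, and similarly for $D$-prim. So Lemma \ref{preserves SD and PD} immediately gives the suavity/primness of $\alpha_X(1_{D(X)})\in D'(X)$, together with the identifications
\[
\alpha_X\bigl(\mathrm{SD}_f(1_{D(X)})\bigr)\simeq \mathrm{SD}_f\bigl(\alpha_X(1_{D(X)})\bigr),\qquad
\alpha_X\bigl(\mathrm{PD}_f(1_{D(X)})\bigr)\simeq \mathrm{PD}_f\bigl(\alpha_X(1_{D(X)})\bigr).
\]

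To convert these into the statements about $D'$-suavity and $D'$-primness of $f$ itself (rather than about $\alpha_X(1_{D(X)})$), the key input is that $\alpha$ sends units to units. This is already established in Corollary \ref{preserves colimits and unit}(2), which yields $\alpha_X(1_{\Shv(X;\Sp)})\simeq 1_{D'(X)}$. Substituting this identification in the conclusion of the lemma upgrades \textquotedblleft$\alpha_X(1_{D(X)})$ is $f$-suave\textquotedblright{} to \textquotedblleft$1_{D'(X)}$ is $f$-suave\textquotedblright, which is precisely the definition of $f$ being $D'$-suave, and analogously in the prim case.

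For the identification of dualizing complexes, I would then combine the two ingredients as follows. In the suave case, unwinding the definitions $\omega_f:=\mathrm{SD}_f(1_{D(X)})\simeq f^!1_{D(S)}$ and $\omega_f':=\mathrm{SD}_f(1_{D'(X)})\simeq f^!1_{D'(S)}$, I get
\[
\alpha_X(\omega_f)\simeq \alpha_X\bigl(\mathrm{SD}_f(1_{D(X)})\bigr)\simeq \mathrm{SD}_f\bigl(\alpha_X(1_{D(X)})\bigr)\simeq \mathrm{SD}_f(1_{D'(X)})\simeq \omega_f',
\]
and the same chain with $\mathrm{PD}_f$ in place of $\mathrm{SD}_f$ settles the prim case, yielding $\alpha_X D_f\simeq D_f'$.

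Since every nontrivial piece is already in place (Lemma \ref{preserves SD and PD} for the transport of adjunctions along $\varphi_\alpha$, and Corollary \ref{preserves colimits and unit} for the unit-preservation property of $\alpha_X$), there is no real obstacle to overcome; the only thing requiring care is the bookkeeping that the object $P$ being considered is the unit and that $\alpha$ indeed preserves units in the condensed, not just profinite, setting — but this is precisely the content of Corollary \ref{preserves colimits and unit}(2), which was deduced using the joint conservativity of pullbacks to profinite sets and Remark \ref{sym mon for profinite set case}.
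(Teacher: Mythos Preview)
Your proposal is correct and matches the paper's proof essentially verbatim: the paper first invokes Corollary \ref{preserves colimits and unit} to get $\alpha_X(1_{D(X)})\simeq 1_{D'(X)}$, and then says the result follows from Lemma \ref{preserves SD and PD}. Your write-up simply spells out the chain of equivalences that the paper leaves implicit.
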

	\begin{proof}
		By Corollary \ref{preserves colimits and unit},
		we have $\alpha_X( 1_{D(X)} )\simeq 1_{D^\prime(X)}$.
		Now, it follows from Lemma \ref{preserves SD and PD}.
	\end{proof}

	In the proof of Theorem \ref{suave prim case commutes},
	we will use the concept of 2-natural transformation.
	So we recall this concept.
	\begin{dfn}[2-natural transformation]
		Let $D,E:\A\to \B$ be 2-functors between 2-categories.
		A \textit{2-natural transformation}
		$\alpha:D\implies E:\A\to \B$
		between $D$ and $E$ assigns to
		 each object $A\in \A$,
		an arrow $\alpha_A: DA \to EA $ in $\B$,
		such that:
		\begin{itemize}
			\item [(1)] for each map $f:A_1\to A_2$ in $\A$,
			we have
			$ \alpha_{A_2}\circ Df
			\simeq    Ef\circ \alpha_{A_1} $ in $\B$. 
			\item [(2)] for each 2-cell $\mu:f\implies g$
			from $A_1$ to $A_2$ in $\A$,
			we have
		$ \alpha_{A_2}\circ D\mu
		\simeq    E\mu \circ \alpha_{A_1} $
		in $\B$.
		\end{itemize}
	\end{dfn}

	\begin{thm} 	\label{suave prim case commutes}
	Let $f:Y\to X$ be a map in $E$
	and $\alpha:D\simeq \Shv(-;\Sp) \to D^\prime$ the unique morphism from Theorem \ref{initial}. We have:
	\begin{itemize}
		\item [(1)]If $f:Y\to X$ is $D$-suave,
		then the commutative square
		\begin{center}
			\begin{tikzcd}
				D(Y)\ar[r,"\alpha_Y"]\ar[d,"f_!"]
				& D^\prime(Y)\ar[d,"f_!"]\\
				D(X)\ar[r,"\alpha_X"] & D^\prime(X)
			\end{tikzcd}
		\end{center}
		is vertically right adjointable. That is, the canonical map
		$ \alpha_Y f^! \to f^! \alpha_X$
		is an equivalence.
		\item [(2)]If $f:Y\to X$ is $D$-prim,
		then the commutative square
		\begin{center}
			\begin{tikzcd}
				D(X)\ar[r,"\alpha_X"]\ar[d,"f^*"]
				& D^\prime(X)\ar[d,"f^*"]\\
				D(Y)\ar[r,"\alpha_Y"] & D^\prime(Y)
			\end{tikzcd}
		\end{center}
		is vertically right adjointable. That is, the canonical map
		$ \alpha_X f_* \to  f_* \alpha_Y$
		is an equivalence.
	\end{itemize}
\end{thm}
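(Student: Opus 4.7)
The plan is to invoke the 2-natural transformation $\eta_X : \Psi_{D,X} \Rightarrow \Psi_{D',X} \circ \varphi_\alpha$ of 2-functors $K_{D,X} \to \Cat_2$ induced by $\alpha$ (Remark \ref{phi_alpha}), together with the general fact that a 2-natural transformation between 2-functors sends each adjunction $L \dashv R$ in its source 2-category to a pair of commutative naturality squares (one along $L$, one along $R$) that are canonically mates of each other under the induced adjunctions in the target. This mate-compatibility is automatic: the mate construction uses only units, counits, and compositions of 2-cells, all of which a 2-natural transformation preserves.

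For part~(1), I will use the adjunction $1_{D(Y)} \dashv \omega_f$ in $K_{D,X}$ provided by the $D$-suave hypothesis. By Lemma \ref{preserves SD and PD} and Corollary \ref{preserves delta_f and D_f}, its image under $\varphi_\alpha$ is the analogous adjunction $1_{D'(Y)} \dashv \omega'_f$ in $K_{D',X}$ with $\omega'_f \simeq \alpha_Y(\omega_f)$. Applying $\Psi$ yields $f_! \dashv f^!$ in $\Cat_2$ on both sides. The naturality square of $\eta_X$ along the morphism $1_{D(Y)} \colon Y \to X$ is precisely the equivalence $\alpha_X f_! \simeq f_! \alpha_Y$ of Remark \ref{commutes with * & !}. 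By mate-compatibility, its canonical mate under $f_! \dashv f^!$ is the naturality square of $\eta_X$ along $\omega_f \colon X \to Y$, which is the sought equivalence $\alpha_Y f^! \simeq f^! \alpha_X$.

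For part~(2), the $D$-prim hypothesis instead yields an adjunction $D_f \dashv 1_{D(Y)}$ in $K_{D,X}$, which under $\Psi$ becomes $(D_f \otimes f^*(-)) \dashv f_!$ in $\Cat_2$ rather than the desired $f^* \dashv f_*$. To bridge the gap, I will invoke the standard formula $f_*(A) \simeq f_!(A \otimes D_f)$ for $D$-prim maps (derived in \cite{HM24} from the above $\Psi$-adjunction together with the dualizability of $D_f$). Combining this with Corollary \ref{preserves delta_f and D_f} (so $f$ is also $D'$-prim with $D'_f \simeq \alpha_Y(D_f)$), Remark \ref{commutes with * & !} ($\alpha$ commutes with $f_!$), and Corollary \ref{preserves colimits and unit} ($\alpha$ is symmetric monoidal), the computation
\[
\alpha_X f_*(A) \simeq \alpha_X f_!(A \otimes D_f) \simeq f_! \alpha_Y(A \otimes D_f) \simeq f_!(\alpha_Y(A) \otimes D'_f) \simeq f_* \alpha_Y(A)
\]
delivers the claim.

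The hard part will be to verify that these constructed equivalences are really the canonical mates determined by the adjunctions $f_! \dashv f^!$ and $f^* \dashv f_*$, not merely some other abstract equivalences of functors. For part~(1) this coherence is exactly what 2-naturality encodes, while for part~(2) it requires an additional check that the formula $f_* \simeq f_!(-\otimes D_f)$ and each equivalence in the chain respect the unit and counit of $f^* \dashv f_*$ on both sides.
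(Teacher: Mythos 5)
Your part (1) is essentially the paper's own argument: one verifies that $\Psi_{D,X}\to\Psi_{D',X}\circ\varphi_\alpha$ is a 2-natural transformation (the paper does not treat this as automatic from Remark \ref{phi_alpha}; its steps (a) and (b) are devoted to checking naturality on 1-cells and 2-cells, using that $\alpha$ commutes with $f^*$, $f_!$ and $\otimes$), applies it to the adjunction $1_{D(Y)}\dashv\omega_f$ in $K_{D,X}$, and invokes the mate correspondence \cite[2.5]{KS74} to conclude that the $f_!$-square and the $f^!$-square are mates, hence the former is vertically right adjointable. So for (1) you match the paper.

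Part (2), however, is left with a genuine gap, and you flag it yourself without closing it. The chain $\alpha_X f_*\simeq\alpha_X f_!(-\otimes D_f)\simeq f_!\alpha_Y(-\otimes D_f)\simeq f_!(\alpha_Y(-)\otimes D'_f)\simeq f_*\alpha_Y$ is precisely the computation the paper writes down first and then explicitly rejects, because exhibiting \emph{some} equivalence of functors does not show that the \emph{canonical} mate $\alpha_X f_*\to f_*\alpha_Y$ of the $f^*$-square is an equivalence, and that specific 2-cell is what the statement asserts to be invertible. You correctly observe that applying $\Psi$ to $D_f\dashv 1_{D(Y)}$ only yields $(D_f\otimes f^*(-))\dashv f_!$, but the remedy is not to abandon the 2-categorical argument: the object $1_{D(Y)}\in D(Y)\simeq \Fun_{K_{D,X}}(X,Y)$ can equally be regarded as a morphism $X\to Y$, where primness makes it a left adjoint with right adjoint $D_f\colon Y\to X$, and $\Psi$ sends this adjunction to $f^*\dashv f_!(-\otimes D_f)\simeq f_*$; this is what the paper means by running the argument of part (1) in $K_{D,X}^{\mathrm{co},\mathrm{op}}$. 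Then 2-naturality plus \cite[2.5]{KS74} identifies the $f_*$-square as the mate of the $f^*$-square, which is exactly the coherence your computation lacks. Without this step (or an explicit check that every equivalence in your chain is compatible with the unit and counit of $f^*\dashv f_*$), part (2) remains unproved.
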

	\begin{proof}
		In each of the cases, we can construct 
		some isomorphism between the two functors.
		For (1), we have
		$$
		\alpha_Y f^!
		\simeq \alpha_Y( f^! 1_{D(X)}  \otimes f^*(-) )
		\simeq \alpha_Y f^! 1_{D(X)} \otimes \alpha_Y f^*(-)
		\simeq f^! 1_{D^\prime(X)} \otimes f^* \alpha_X(-)
		\simeq f^!\alpha_X,
		$$  
		where the first equivalence is because $f$ is $D$-suave,
		the second equivalence is by
		Corollary \ref{preserves colimits and unit},
		the third equivalence is by
		Corollary \ref{preserves delta_f and D_f}, 
		and the last equivalence is because $f$ is $D^\prime$-suave.
		
		And for (2), we have
		\begin{align*}
			\alpha_X f_*
			&\simeq \alpha_X f_!(-\otimes D_f)
			\simeq f_! \alpha_Y(-\otimes D_f)
			\simeq f_!( \alpha_Y(-)\otimes \alpha_Y D_f )
			\simeq
			f_!( \alpha_Y(-)\otimes D_f^\prime)
			\simeq f_* \alpha_Y,
		\end{align*}
		where the first equivalence is because $f$ is $D$-prim,
		the second equivalence is by 
		Remark \ref{commutes with * & !}, 
		the third equivalence is by
		Corollary \ref{preserves colimits and unit}, 
		the fourth equivalence is by
		Corollary \ref{preserves delta_f and D_f},
		and the last equivalence is because $f$ is $D^\prime$-prim.
		
		However, with this strategy it is not clear why the constructed isomorphism is the natural one.
		
		Now, we mimic a conceptual proof
		from \cite[4.5.13]{HM24}
		that the isomorphism is indeed the correct one, using the 2-category of kernels.
		
		We first prove (1).
		We claim that
		$ \Psi_{D,X}\to \Psi_{D^\prime,X}\circ \varphi_\alpha$
		is a 2-natural transformation
		between 2-functors from
		$K_{D,X}$ to $\Cat_2$.
		This is ensured by (a) and (b) in the following:
		
		(a). For any map $M:Y_1\to Y_2$ in $K_{D,X}$,
		i.e. any object $ M\in D(Y_2 \times_X Y_1)$,
		there is a commutative square
	\begin{center}
		\begin{tikzcd}
			D(Y_1)\ar[r,"\alpha_{Y_1} "]
			\ar[d,"\pi_{2!}(\pi_1^*(-)\otimes M)"swap]
			& D^\prime(Y_1)
			\ar[d,"\pi_{2!}(\pi_1^*(-)
			\otimes \alpha_{Y_2\times_X Y_1}( M) )"]\\
			D(Y_2)\ar[r,"\alpha_{Y_2}"]
			& D^\prime(Y_2).
		\end{tikzcd}
	\end{center}
	Indeed, this square can be written as
\begin{center}
	\begin{tikzcd}
		D(Y_1)\ar[r,"\pi_1^*"]\ar[d,"\alpha_{Y_1}"]
		& D(Y_2\times_X Y_1)\ar[rr,"-\otimes M"]
		\ar[d,"\alpha_{Y_2\times_X Y_1}"]
		& & D(Y_2\times_X Y_1)\ar[r,"\pi_{2!}"]
		\ar[d,"\alpha_{Y_2\times_X Y_1}"]
		& D(Y_2) \ar[d,"\alpha_{Y_2}"]\\
		D^\prime(Y_1)\ar[r,"\pi_1^*"]
		& D^\prime(Y_2\times_X Y_1)
		\ar[rr,"-\otimes \alpha_{Y_2\times_X Y_1}(M) "]
		&& D^\prime(Y_2\times_X Y_1)\ar[r,"\pi_{2!}"]
		& D^\prime(Y_2),
	\end{tikzcd}
\end{center}
where the above squares are commutative
by Remark \ref{commutes with * & !} and
Corollary \ref{preserves colimits and unit}.

(b). Let $M$ and $N$ be any two morphisms from $Y_1$ to $Y_2$
in $K_{D,X}$. For any 2-morphism $\mu:M\to N$ in $K_{D,X}$,
i.e. a morphism $\mu:M\to N$ in $D(Y_2\times_X Y_1)$,
it is easy to show that
$$
\alpha_{Y_2}
\pi_{2!}( \pi_1^*(-)\otimes \mu )
\simeq 
\pi_{2!}(\pi_1^*\alpha_{Y_1}(-)
\otimes \alpha_{Y_2\times_X Y_1}( \mu) ).
$$

Now if $f:Y\to X$ is $D$-suave, then
after applying the natural transformation
$ \Psi_{D,X}\to \Psi_{D^\prime,X}\circ \varphi_\alpha:
K_{D,X}\to \Cat_2$ to the adjunction
		\begin{tikzcd}
			Y \ar[r,shift left,"1_{D(Y)} "] &
			X \ar[l,shift left,"\omega_f"]
		\end{tikzcd}
		in $K_{D,X}$,
		we get the commutative squares
		\begin{center}
			\begin{tikzcd}
				D(Y)\ar[d,"f_!"]\ar[r,"\alpha_Y"]
				& D^\prime(Y)\ar[d,"f_!"]\\
				D(X)\ar[r,"\alpha_X"]
				& D^\prime(X)
			\end{tikzcd}
	and
			\begin{tikzcd}
				D(X)\ar[r,"\alpha_X"]\ar[d,"\omega_f \otimes f^*"]
				& D^\prime(X)\ar[d,"\omega_f^\prime \otimes f^*"]\\
				D(Y)\ar[r,"\alpha_Y"] & D^\prime(Y)
			\end{tikzcd}
		\end{center}
		in $\Cat_2$.
		By \cite[2.5]{KS74},
		the above two squares are mates to each other.
		Thus
	 the commutative square
	\begin{center}
		\begin{tikzcd}
			D(Y)\ar[r,"\alpha_Y"]\ar[d,"f_!"]
			& D^\prime(Y)\ar[d,"f_!"]\\
			D(X)\ar[r,"\alpha_X"] & D^\prime(X)
		\end{tikzcd}
	\end{center}
	is vertically right adjointable.
		
		The proof of (2) is similar
		if we run the argument in $K_{D,X}^{\mathrm{co},\op}$.
	\end{proof}

	\begin{lem} \label{after tensor ver right adj}
		Suppose a commutative square
		\begin{center}
			\begin{tikzcd}
				\C\ar[r,"h"]\ar[d,"f"]
				& \C^\prime \ar[d,"f^\prime"]\\
				\D \ar[r,"g"] & \D^\prime
			\end{tikzcd}
		\end{center}
in $\Pr^L$,	which is vertically	right adjointable.
If both $f^R$ and $f^{\prime R}$
preserve colimits,
then for any $\E \in \Pr^L$,
the square
	\begin{center}
	\begin{tikzcd}
		\C\otimes \E \ar[r,"h\otimes \mathrm{id}_\E"]
		\ar[d,"f\otimes \mathrm{id}_\E "]
		& \C^\prime \otimes \E
		 \ar[d,"f^\prime \otimes \mathrm{id}_\E"]\\
		\D \otimes \E \ar[r,"g\otimes \mathrm{id}_\E"]
		 & \D^\prime \otimes\E
	\end{tikzcd}
\end{center}
is also vertically right adjointable.
	\end{lem}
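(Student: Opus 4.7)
The plan is to exploit the functoriality of the Lurie tensor product on $\Pr^L$. Since both $f^R$ and $f^{\prime R}$ preserve colimits by assumption, they are themselves morphisms in $\Pr^L$, and the adjunctions $f\dashv f^R$ and $f'\dashv f^{\prime R}$ live entirely inside $\Pr^L$. Because the Lurie tensor product refines to a symmetric monoidal structure on $\Pr^L$ as an $(\infty,2)$-category, the endofunctor $-\otimes \E: \Pr^L \to \Pr^L$ is a 2-functor and hence preserves adjunctions. In particular $(f\otimes \mathrm{id}_\E)\dashv (f^R\otimes \mathrm{id}_\E)$ and $(f'\otimes \mathrm{id}_\E)\dashv (f^{\prime R}\otimes \mathrm{id}_\E)$ are adjoint pairs in $\Pr^L$, whose units and counits are obtained by tensoring the original units and counits with $\mathrm{id}_\E$.

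Next I would identify the mate of the tensored square. Recall that the mate of the original square is the composition
\[
hf^R \xrightarrow{\eta' \cdot hf^R} f^{\prime R}f'hf^R \simeq f^{\prime R}gff^R \xrightarrow{f^{\prime R}g\cdot \epsilon} f^{\prime R}g,
\]
where $\eta'$ is the unit of $f'\dashv f^{\prime R}$, $\epsilon$ is the counit of $f\dashv f^R$, and the middle equivalence comes from $gf\simeq f'h$. Applying $-\otimes\mathrm{id}_\E$ termwise and using both the natural isomorphism $(AB)\otimes \mathrm{id}_\E\simeq (A\otimes \mathrm{id}_\E)(B\otimes \mathrm{id}_\E)$ and the identification of tensored units and counits from the previous step, I obtain precisely the mate transformation of the tensored square. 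In other words, the mate of the tensored square is the image of the original mate under $-\otimes\mathrm{id}_\E$.

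Since the original mate is an equivalence by the hypothesis of vertical right adjointability, and $-\otimes\mathrm{id}_\E$ preserves equivalences, the tensored mate is also an equivalence, which gives the desired vertical right adjointability of the tensored square.

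The main subtlety is the 2-categorical assertion that $-\otimes\E$ preserves adjunctions and mates. This is a formal consequence of the fact that the Lurie tensor product refines to a symmetric monoidal structure on $\Pr^L$ as an $(\infty,2)$-category, so that $-\otimes \E$ is a symmetric monoidal 2-endofunctor and therefore automatically sends adjoint pairs to adjoint pairs and mate 2-cells to mate 2-cells.
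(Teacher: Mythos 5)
Your proof is correct and follows essentially the same route as the paper: both arguments use that colimit-preservation of $f^R$ and $f^{\prime R}$ identifies $(f\otimes \mathrm{id}_\E)^R$ with $f^R\otimes \mathrm{id}_\E$ (and likewise for $f^\prime$), and then transport the equivalence $h f^R \stackrel{\sim}{\to} f^{\prime R} g$ across $-\otimes \mathrm{id}_\E$. Your explicit appeal to the 2-functoriality of $-\otimes\E$ to identify the tensored units/counits, and hence the mate of the tensored square with the tensor of the original mate, is precisely the point the paper leaves implicit when it calls the resulting equivalence ``canonical,'' so if anything you are slightly more careful on that step.
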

\begin{proof}
	Since $f^R$ and $f^{\prime R}$ preserve colimits,
	we have $(f\otimes\mathrm{id}_\E)^R\simeq f^R \otimes \mathrm{id}_\E $ and
	$(f^\prime\otimes\mathrm{id}_\E)^R\simeq f^{\prime R}  \otimes \mathrm{id}_\E .$
Thus, applying $-\otimes \mathrm{id}_\E$
to the canonical equivalence
$h f^R \stackrel{\sim}{\to}  f^{\prime R} g$,
we get the canonical equivalence
$$
(h\otimes \mathrm{id}_\E)\circ 
 (f\otimes\mathrm{id}_\E)^R
 \stackrel{\sim}{\to}
 (f^\prime\otimes\mathrm{id}_\E)^R \circ 
 (g\otimes \mathrm{id}_\E),
$$
which finishes the proof.
\end{proof}

\begin{cor} \label{Shv(-;D(*))to D(-) case}
	Let $D\in 6\mathrm{FF}(\CondAn^\light,E)^\cont$.
	Let $f:Y\to X$ be a map in $E$
	and consider the counit $\epsilon: \Shv(-;D(*))\to D(-) $.
	We have:
	\begin{itemize}
		\item [(1)] If $f:Y\to X$ is $\Shv(-;\Sp)$-suave,
		then the commutative square
		\begin{center}
			\begin{tikzcd}
				\Shv(Y;D(*))\ar[r,"\epsilon_Y"]\ar[d,"f_!"]
				& D(Y)\ar[d,"f_!"]\\
				\Shv(X;D(*))\ar[r,"\epsilon_X"]
				& D(X)
			\end{tikzcd}
		\end{center}
		is vertically right adjointable.
		\item [(2)] If $f:Y\to X$ is $\Shv(-;\Sp)$-prim,
		then the commutative square
		\begin{center}
			\begin{tikzcd}
				\Shv(X;D(*))\ar[r,"\epsilon_X"]\ar[d,"f^*"]
				& D(X)\ar[d,"f^*"]\\
				\Shv(Y;D(*))\ar[r,"\epsilon_Y"]
				& D(Y)
			\end{tikzcd}
		\end{center}
		is vertically right adjointable.
	\end{itemize}
\end{cor}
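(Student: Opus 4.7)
The plan is to reduce the claim to Theorem \ref{suave prim case commutes} using the factorization of $\epsilon$ recorded in Corollary \ref{factorization of epsilon}:
$$\Shv(-;D(*)) \simeq \Shv(-;\Sp) \otimes D(*) \xrightarrow{\alpha \otimes \mathrm{id}} D(-) \otimes D(*) \xrightarrow{\mu} D(-),$$
where on each $X$ the action map $\mu_X$ is given by $N \otimes M \mapsto N \otimes p_X^* M$ with $p_X : X \to *$. Substituting this factorization into either square of the corollary exhibits it as a horizontal composition of two squares, so since mates compose along horizontal pasting, it suffices to prove each of these subsquares is vertically right adjointable.

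For (1), the left subsquare (induced by $\alpha \otimes \mathrm{id}$) is handled as follows. Theorem \ref{suave prim case commutes}(1) gives vertical right adjointability of the square for $\alpha$ itself. Since $f$ is $\Shv(-;\Sp)$-suave and hence $D$-suave by Corollary \ref{preserves delta_f and D_f}, on both sides we have $f^! \simeq \omega_f \otimes f^*(-)$, which preserves colimits; Lemma \ref{after tensor ver right adj} applied with $\E = D(*)$ therefore transports vertical right adjointability across the tensor product. For the right subsquare (induced by $\mu$), the Beck--Chevalley map unwinds to the suave projection formula
$$f^!(A) \otimes p_Y^* M \xrightarrow{\sim} f^!(A \otimes p_X^* M),$$
which is an equivalence because $f$ is $D$-suave.

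Part (2) follows by the same pattern: apply Theorem \ref{suave prim case commutes}(2) and Lemma \ref{after tensor ver right adj} to the left subsquare, using the presentation $f_* \simeq f_!(- \otimes D_f)$ of the prim case to certify that $f_*$ preserves colimits, and reduce the right subsquare to the prim projection formula $f_*(A) \otimes p_X^* M \simeq f_*(A \otimes p_Y^* M)$. The main subtlety I anticipate is not in any individual step but in the bookkeeping needed to confirm that the Beck--Chevalley transformation of the composite square really agrees with the composition of those of the two subsquares; this amounts to the standard fact that mates compose along horizontal pasting in $\Pr^L_\kappa$, and is compatible with our setup because the factorization of Corollary \ref{factorization of epsilon} is a factorization of morphisms in $6\mathrm{FF}(\CondAn^\light, E)$, which respects the horizontal data entering the mates.
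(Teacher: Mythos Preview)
Your proposal is correct and follows the same global strategy as the paper: factor $\epsilon$ via Corollary \ref{factorization of epsilon} into $\alpha \otimes \mathrm{id}$ followed by the action map $\sigma$, then treat the two resulting subsquares separately, handling the left one exactly as the paper does via Theorem \ref{suave prim case commutes} together with Lemma \ref{after tensor ver right adj}.

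The only genuine divergence is in the right subsquare. The paper treats $\sigma:D(-)\otimes D(*)\to D(-)$ as a morphism of six functor formalisms and reruns the 2-category-of-kernels argument from the proof of Theorem \ref{suave prim case commutes}: it verifies that $\Psi_{D',X}\to \Psi_{D,X}\circ\varphi_\sigma$ is a 2-natural transformation and then applies it to the adjunction $(1_{D'(Y)},\omega_f)$ in $K_{D',X}$, invoking the mate calculus of \cite{KS74} to conclude. You instead identify the Beck--Chevalley map of the right subsquare directly with the projection-formula map $f^!(A)\otimes f^*(p_X^*M)\to f^!(A\otimes p_X^*M)$ (respectively $f_*(A)\otimes p_X^*M\to f_*(A\otimes f^*p_X^*M)$), which is an equivalence precisely because $f$ is $D$-suave (respectively $D$-prim). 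Your route is more elementary and avoids reinvoking the kernel machinery, at the cost of the small unwinding needed to see that the Beck--Chevalley map really is the projection-formula map; the paper's route is more uniform with its earlier proofs and sidesteps that identification entirely.
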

\begin{proof}
	Note that the map
	$\sigma: D^\prime(-) \simeq D(-)\otimes D(*)\to D(-) $
	is a morphism of six functor formalisms,
	which induces a 2-functor
	$\varphi_\sigma:K_{D^\prime,X}\to K_{D,X}$
	by Remark \ref{phi_alpha}.
	Thus we get a map
	$ \Psi_{D^\prime,X}\to \Psi_{D,X}\circ \varphi_\sigma $
	between 2-functors from
	$K_{D^\prime,X}$ to $\Cat_2$.
	We claim that
	 $ \Psi_{D^\prime,X}\to \Psi_{D,X}\circ \varphi_\sigma $
	is a 2-natural transformation,
	which is ensured by (a) and (b) in the following:
	
	(a). For any map $M:Y_1 \to Y_2$ in $K_{D^\prime,X}$, 
 i.e. an object $M\in \Fun_{K_{D^\prime,X}}(Y_1,Y_2  ) 
  \simeq D^\prime(Y_2 \times_X Y_1)$, there is a commutative square
  \begin{center}
  	\begin{tikzcd}
  		D^\prime(Y_1)\ar[r,"\sigma_{Y_1}"]
  		\ar[d," \pi_{2!}(\pi_1^*(-)\otimes M ) "swap]
  		& D(Y_1)
  		\ar[d," \pi_{2!}(\pi_1^*(-)\otimes
  		 \sigma_{Y_2\times_X Y_1} (M) ) "]\\
  		D^\prime(Y_2)\ar[r,"\sigma_{Y_2}"]
  		& D(Y_2).
  	\end{tikzcd}
  \end{center}
 Note that this square can be written as
  \begin{center}
  	\begin{tikzcd}
  			D^\prime(Y_1)\ar[r,"\pi_1^*"]\ar[d,"\sigma_{Y_1}"]
  			& D^\prime(Y_2 \times_X Y_1 )
  			\ar[rr,"-\otimes M"]
  			\ar[d,"\sigma_{Y_2 \times_X Y_1 }"]
  			&& D^\prime(Y_2 \times_X Y_1 )\ar[r,"\pi_{2!}"]
  			\ar[d,"\sigma_{Y_2 \times_X Y_1 }"]
  			& D^\prime(Y_2)\ar[d,"\sigma_{Y_2}"]\\
  			D(Y_1) \ar[r,"\pi_1^*"]
  			&  D(Y_2 \times_X Y_1 )
  			\ar[rr,"-\otimes\sigma_{Y_2 \times_X Y_1} (M) "]
  			&& D(Y_2 \times_X Y_1 )\ar[r,"\pi_{2!}"]
  			& D(Y_2)
  	\end{tikzcd}
  \end{center}
  where each square is commutative.
  
 (b). Let $M$ and $N$ be any two morphisms from $Y_1$ to $Y_2$
 in $K_{D^\prime,X}$. For any 2-morphism 
 $\mu:M\to N$ in $K_{D^\prime,X}$,
 i.e. a morphism $\mu:M\to N$ in $D^\prime(Y_2\times_X Y_1)$,
 it is easy to show that
 $$
 \sigma_{Y_2} \pi_{2!}(\pi_1^*(-)\otimes \mu )
 \simeq
 \pi_{2!}(\pi_1^* \sigma_{Y_1}(-)\otimes
  \sigma_{Y_2\times_X Y_1}(\mu)   ).
 $$

Now, by Corollary \ref{factorization of epsilon}, 
the square in (1) can be written as
  \begin{center}
  	\begin{tikzcd}
  		\Shv(Y;\Sp)\otimes D(*) \ar[r,"\alpha_Y \otimes \mathrm{id}"]\ar[d]
  		& D(Y)\otimes D(*)\ar[r]\ar[d]
  		& D(Y)\ar[d]\\
  		\Shv(X;\Sp)\otimes D(*)
  		\ar[r,"\alpha_X \otimes \mathrm{id} "]
  		& D(X)\otimes D(*)\ar[r]
  		& D(X).
  	\end{tikzcd}
  \end{center}
 In order to show the square in (1) is vertically right adjointable,
 it suffices to show the left square and right square in the above are both vertically right adjointable. 
  
 If $f:Y\to X$ is $\Shv(-;\Sp)$-suave,
 then $f^! : \Shv(X;\Sp)\to \Shv(Y;\Sp)$
 and $f^!:D(X)\to D(Y)$ preserve all colimits. 
 By Theorem \ref{suave prim case commutes}
 and Lemma \ref{after tensor ver right adj},
 we know that the left square
 	\begin{center}
 	\begin{tikzcd}
 		\Shv(Y;\Sp)\otimes D(*)
 		 \ar[r,"\alpha_Y \otimes \mathrm{id} "]
 		 \ar[d,"f_!\otimes \mathrm{id}"]
 		& D(Y)\otimes D(*) \ar[d,"f_!\otimes \mathrm{id}"]\\
 		\Shv(X;\Sp)\otimes D(*) 
 		\ar[r,"\alpha_X \otimes \mathrm{id}"] & D(X)\otimes D(*)
 	\end{tikzcd}
 \end{center}
 is vertically right adjointable. 
  For the right square to be vertically right adjointable,
 after applying the natural transformation
  $ \Psi_{D^\prime,X}\to \Psi_{D,X}\circ \varphi_\sigma :
  K_{D^\prime,X}\to \Cat_2  $
  to the adjunction
  	\begin{tikzcd}
  	Y \ar[r,shift left,"1_{D^\prime(Y)} "] &
  	X \ar[l,shift left,"\omega_f"]
  \end{tikzcd}
  in $K_{D^\prime,X}$, we get two commutative squares
  \begin{center}
  	\begin{tikzcd}
  		D(Y)\otimes D(*)\ar[r]\ar[d,"f_!\otimes \mathrm{id}"]
  		& D(Y)\ar[d,"f_!"]\\
  		D(X)\otimes D(*)\ar[r]
  		& D(X)
  	\end{tikzcd}
  	and
  		\begin{tikzcd}
  		D(X)\otimes D(*)\ar[r]
  		\ar[d," \omega_f \otimes f^*\otimes \mathrm{id} "]
  		& D(X)\ar[d," \omega_f \otimes f^* "]\\
  		D(Y)\otimes D(*)\ar[r]
  		& D(Y),
  	\end{tikzcd}
  \end{center}
  which are mates to each other
  by \cite[2.5]{KS74}.
  Thus we can conclude that the square
  \begin{center}
  		\begin{tikzcd}
  		D(Y)\otimes D(*)\ar[r]\ar[d,"f_!\otimes \mathrm{id}"]
  		& D(Y)\ar[d,"f_!"]\\
  		D(X)\otimes D(*)\ar[r]
  		& D(X)
  	\end{tikzcd}
  \end{center}
  is vertically right adjointable,
  which finishes the proof of (1).
  
  The proof of (2) is similar.
  
\end{proof}

	Recall that given a six functor formalism
	$D:\mathrm{Corr}(\CondAn^\mathrm{light},E)\to {\Pr}^L$,
	for any $!$-able map $f:X\to *$ and any object
	 $E\in D(*)$, one has:
	\begin{itemize}
		\item [(1)](sheaf cohomology)
		$H^*(X,E)_D:= f_* f^* E$.
		\item [(2)](compactly supported cohomology)
		$H^*_c(X,E)_D:= f_! f^* E$.
		\item [(3)](sheaf homology)
		$ H_*(X,E)_D:= f_! f^!E$.
		\item [(4)](Borel-Moore homology)
		$H_*^\mathrm{BM}(X,E)_D:=f_* f^! E$.
	\end{itemize}
	Here, we write down the six functor formalism $D$
	to dedicate the dependence on it.

	\begin{prop}
	Let $D\in 6\mathrm{FF}(\CondAn^\light,E)^\cont$.
		For the map $ \epsilon:\Shv(-;D(*))\to D(-)$,
		we have:
		\begin{itemize}
			\item [(1)] If $f:X\to *$ is $!$-able, then
			$ H^*_c(X,E)_{\Shv(-;D(*))}\simeq H^*_c(X,E)_D$.
			\item [(2)]
			If $f:X\to *$ is $!$-able, then
			$H_*^\mathrm{BM}(X,E)_{\Shv(-;D(*))}
			\simeq H_*^\mathrm{BM}(X,E)_D $.
			\item [(3)]
			If $f:X\to *$ is $\Shv(-;D(*))$-prim,
			then $ H^*(X,E)_{\Shv(-;D(*))}\simeq H^*(X,E)_D.$
			\item [(4)]
			If $f:X\to *$ is $\Shv(-;D(*))$-suave,
			then $ H_*(X,E)_{\Shv(-;D(*))}\simeq H_*(X,E)_D.$
		\end{itemize}  
	\end{prop}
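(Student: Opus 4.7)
The plan is to evaluate everything through the counit $\epsilon:\Shv(-;D(*))\to D(-)$ at the point, using the key observation that $\epsilon_*:\Shv(*;D(*))\simeq D(*)\to D(*)$ is canonically equivalent to the identity (this follows from the triangle identity for the adjunction between $\CAlg(\Pr^\dual)$ and $6\mathrm{FF}(\CondAn^\light,E)^\cont$ evaluated at $D$). Each of the four cohomology theories being a composite of $f^*,f_!,f^!,f_*$ applied to $E\in D(*)$, the problem reduces to determining when each of these functors commutes with $\epsilon$.

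For (1), assuming only that $f$ is $!$-able, both $f^*$ and $f_!$ commute with $\epsilon$ by Remark \ref{commutes with * & !}; pasting the two squares yields
$$\epsilon_*(f_!f^*E)\simeq f_!\epsilon_X(f^*E)\simeq f_!f^*(\epsilon_*E)\simeq f_!f^*E,$$
with the $f_!,f^*$ on the left in $\Shv(-;D(*))$ and on the right in $D(-)$, giving the claimed equivalence. For (2), I would not try to commute $\epsilon$ through $f_*$ and $f^!$ directly---without a suave/prim hypothesis this need not hold---but instead invoke the duality $H_*^{\mathrm{BM}}(X,E)=f_*f^!E\simeq \underline{\Hom}(f_!1,E)$, valid in any six functor formalism by the adjunctions $f^*\dashv f_*$, $f_!\dashv f^!$ and the projection formula $f_!f^*A\simeq A\otimes f_!1$. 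Combining with (1) applied to $E=1$ and the symmetric monoidality of $\epsilon$, the object $f_!1$ matches in both formalisms; since $\Shv(*;D(*))=D(*)$ as symmetric monoidal $\infty$-categories, their internal homs coincide.

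For (3) and (4), I propose to apply a generalization of Theorem \ref{suave prim case commutes}, with the role of $\alpha:\Shv(-;\Sp)\to D'$ played by $\beta:=\epsilon:\Shv(-;D(*))\to D$. The proof of that theorem uses only that $\beta$ is a symmetric monoidal, colimit-preserving morphism of six functor formalisms: it argues that $\varphi_\beta:K_{D_0,X}\to K_{D_1,X}$ preserves adjunctions and that $\Psi_{D_0,X}\to \Psi_{D_1,X}\circ \varphi_\beta$ is 2-natural, both formal consequences for any such $\beta$. Applied with $f$ being $\Shv(-;D(*))$-prim (resp.\ $\Shv(-;D(*))$-suave), this yields $\epsilon\circ f_*\simeq f_*\circ\epsilon$ (resp.\ $\epsilon\circ f^!\simeq f^!\circ\epsilon$). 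Combining with the unconditional commutation of $\epsilon$ with $f^*$ (resp.\ $f_!$), we obtain (3) and (4) by the same pasting argument as in (1).

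The main obstacle is making this generalization of Theorem \ref{suave prim case commutes} to an arbitrary morphism of 6-functor formalisms explicit; as a fallback, one can strengthen the hypotheses of (3) and (4) to ``$f$ is $\Shv(-;\Sp)$-prim (resp.\ suave)''---which implies $\Shv(-;D(*))$-prim (resp.\ suave) by Lemma \ref{preserves SD and PD} applied to the canonical morphism $\Shv(-;\Sp)\to\Shv(-;D(*))$---and then invoke Corollary \ref{Shv(-;D(*))to D(-) case} directly, avoiding the generalization entirely.
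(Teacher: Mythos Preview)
Your approach to (1) is identical to the paper's. For (2), however, the paper takes a much shorter route: since the horizontal maps $\epsilon_*:\Shv(*;D(*))\to D(*)$ at the top and bottom of the stacked diagram are equivalences, the commutativity of the composite square says that the endofunctors $(f_!f^*)_{\Shv(-;D(*))}$ and $(f_!f^*)_D$ of $D(*)$ are equivalent; passing to right adjoints immediately gives $(f_*f^!)_{\Shv(-;D(*))}\simeq (f_*f^!)_D$. Your route via $\underline{\Hom}(f_!1,E)$ is correct but unnecessary.

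For (3) and (4), your diagnosis is right: one needs the analogue of Theorem~\ref{suave prim case commutes} for the morphism $\epsilon$ rather than $\alpha$. This is not an obstacle---the paper has already carried out precisely this generalization in Corollary~\ref{Shv(-;D(*))to D(-) case}, whose proof (via the 2-functor $\varphi_\sigma$ and the factorization of $\epsilon$) is exactly the uniform argument you sketch. The paper then simply invokes that corollary to get the commutative diagram with $f^*$ and $f_*$ (resp.\ $f_!$ and $f^!$). One wrinkle you correctly flag: Corollary~\ref{Shv(-;D(*))to D(-) case} is stated under the hypothesis that $f$ is $\Shv(-;\Sp)$-prim/suave, whereas the proposition assumes only $\Shv(-;D(*))$-prim/suave; the paper's proof glosses over this discrepancy. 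Your fallback---strengthening the hypothesis to $\Shv(-;\Sp)$-prim/suave---is in effect what the paper's citation of Corollary~\ref{Shv(-;D(*))to D(-) case} requires, so your reading matches the paper's actual argument.
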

	\begin{proof}
		Note that by Remark \ref{commutes with * & !},
		the map $ \epsilon:\Shv(-;D(*))\to D(-)$
		gives the commutative diagram
		\begin{center}
			\begin{tikzcd}
				\Shv(*;D(*))\ar[r,"\sim"]\ar[d,"f^*"]
				&  D(*)\ar[d,"f^*"]\\
				\Shv(X;D(*))\ar[r]\ar[d,"f_!"]
				& D(X)\ar[d,"f_!"]\\
				\Shv(*;D(*))\ar[r,"\sim"]
				&  D(*).
			\end{tikzcd}
		\end{center}
		It follows that (1) holds.
		Passing to right adjoints, we know that (2) holds.
		For (3),
		if $f:X\to *$ is $\Shv(-;D(*))$-prim,
		by Corollary \ref{Shv(-;D(*))to D(-) case},
		we get the commutative diagram
		\begin{center}
			\begin{tikzcd}
				\Shv(*;D(*))\ar[r,"\sim"]\ar[d,"f^*"]
				&  D(*)\ar[d,"f^*"]\\
				\Shv(X;D(*))\ar[r]\ar[d,"f_*"]
				& D(X)\ar[d,"f_*"]\\
				\Shv(*;D(*))\ar[r,"\sim"]
				&  D(*).
			\end{tikzcd}
		\end{center}
		Thus we have $ H^*(X,E)_{\Shv(-;D(*))}\simeq H^*(X,E)_D.$
		Using  Corollary \ref{Shv(-;D(*))to D(-) case},
		we can also prove (4) similarly.
	\end{proof}

	\begin{prop} \label{f_X^* fully faithful}
		Let $I=[0,1]\subset \mathbb{R}$
		be the unit interval and $f:I \to *$ the projection.
	Let $D\in 6\mathrm{FF}(\CondAn^\light,E)^\cont$,   
		and $X$ a light condensed anima with the projection $f_X:X\times I \to X$. 
		Then the pullback functor
		$ f_X^*:D(X)\to D(X\times I) $
		is fully faithful.
		In particular,
		$f_{X*} 1_{D(X\times I)}\simeq 1_{D(X)}$.
	\end{prop}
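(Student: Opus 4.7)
The plan is to reduce full faithfulness of $f_X^\ast$ to a statement about the unit at $1_{D(X)}$, then transfer that statement from the known case $\Shv(-;\Sp)$ via the morphism $\alpha\colon \Shv(-;\Sp)\to D$ of Theorem \ref{initial}, using the prim-ness of $f_X$ throughout.

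First, I would check that $f\colon I\to\ast$ is $\Shv(-;\Sp)$-prim: since $I=[0,1]$ is compact Hausdorff, the map $f$ is proper, and proper maps of LCH spaces (hence of light condensed anima in the extended formalism) are prim in $\Shv(-;\Sp)$. Prim-ness is stable under base change, so $f_X\colon X\times I\to X$ is also $\Shv(-;\Sp)$-prim, and by Corollary \ref{preserves delta_f and D_f}(2) it is $D$-prim as well.

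Next, I would invoke classical homotopy invariance in the base case: because $I$ is contractible, $f_X^\ast\colon\Shv(X;\Sp)\to\Shv(X\times I;\Sp)$ is fully faithful, so the unit $\eta^{\Shv}\colon 1_{\Shv(X;\Sp)}\xrightarrow{\sim} f_{X\ast}1_{\Shv(X\times I;\Sp)}$ is an equivalence. Applying $\alpha_X$ and using Corollary \ref{preserves colimits and unit}(2) (so $\alpha$ carries units to units) together with Theorem \ref{suave prim case commutes}(2) (so $\alpha_X f_{X\ast}\xrightarrow{\sim} f_{X\ast}\alpha_{X\times I}$ since $f_X$ is $\Shv(-;\Sp)$-prim), one transfers $\eta^{\Shv}$ to the map $\eta^D\colon 1_{D(X)}\to f_{X\ast}1_{D(X\times I)}$; the fact that the Beck--Chevalley equivalence intertwines the two units of the adjunctions $f_X^\ast\dashv f_{X\ast}$ is a formal consequence of $\alpha$ being a morphism of six-functor formalisms (and is exactly the naturality built into the 2-functor $\varphi_\alpha$ of Remark \ref{phi_alpha}). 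Hence $\eta^D$ is an equivalence, giving the ``in particular'' clause.

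Finally, I would leverage the projection formula for the $D$-prim map $f_X$. Prim-ness gives $f_{X\ast}\simeq f_{X!}(-\otimes D_{f_X})$, and combining with the $f_!$-projection formula produces a natural equivalence $f_{X\ast}f_X^\ast A\simeq A\otimes f_{X\ast}1_{D(X\times I)}$ under which the unit $\eta^D_A\colon A\to f_{X\ast}f_X^\ast A$ is identified with $\mathrm{id}_A\otimes \eta^D_1$. Since $\eta^D_1$ is already an equivalence, so is $\eta^D_A$ for every $A\in D(X)$, proving that $f_X^\ast$ is fully faithful.

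The main obstacle is the double compatibility check: that the Beck--Chevalley equivalence of Theorem \ref{suave prim case commutes}(2) intertwines the units of the two adjunctions, and that the projection-formula identification identifies $\eta^D_A$ with $\mathrm{id}_A\otimes \eta^D_1$. Both are formal in the 2-categorical setup (applying $\varphi_\alpha$ to an adjunction in $K_{D,X}$ yields an adjunction with matched units in $K_{D^\prime,X}$, and the projection formula is itself derived from the prim adjunction), but they must be checked carefully in the style of the proof of Theorem \ref{suave prim case commutes}.
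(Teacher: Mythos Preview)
Your proposal is correct, and both your final step (the projection-formula reduction to the unit at $1$) and the paper's are identical. The difference lies in how the equivalence $1_{D(X)}\simeq f_{X\ast}1_{D(X\times I)}$ is obtained.

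The paper takes a two-stage detour: it first tensors the $\Shv(-;\Sp)$-level full faithfulness with $D(\ast)$ (using that $D(\ast)$ is dualizable) to get full faithfulness for $\Shv(-;D(\ast))$, then invokes Corollary \ref{Shv(-;D(*))to D(-) case} for the counit $\epsilon\colon\Shv(-;D(\ast))\to D$ only at $X=\ast$ to deduce $f_\ast 1_{D(I)}\simeq 1_{D(\ast)}$, and finally base changes along $X\to\ast$ (using $D$-prim-ness of $f$) to reach general $X$. The virtue of this route is that at $X=\ast$ the horizontal arrows in the relevant square are equivalences, so no unit-compatibility check is needed there; the remaining compatibility is absorbed into prim base change.

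Your route is more direct: you apply Theorem \ref{suave prim case commutes}(2) with $\alpha$ immediately at the level of $f_X$ for arbitrary $X$, transporting the $\Shv$-unit $1\to f_{X\ast}1$ across $\alpha_X$. This bypasses $\Shv(-;D(\ast))$, the dualizability argument, and the separate base-change step. The cost is the compatibility check you flag as the main obstacle, namely that the Beck--Chevalley equivalence $\alpha_X f_{X\ast}\simeq f_{X\ast}\alpha_{X\times I}$ intertwines the two adjunction units; but this is exactly what ``vertically right adjointable'' encodes (the Beck--Chevalley transformation is built from the unit and counit, and unwinding its definition shows that $\alpha_X$ applied to the $\Shv$-unit followed by the Beck--Chevalley equivalence is the $D$-unit whiskered with $\alpha_X$). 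So your concern is already handled by the statement of Theorem \ref{suave prim case commutes}(2), and your argument is a genuine simplification.
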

	\begin{proof}
		By \cite[4.8.7]{HM24},
		the functor $f_X^*:\Shv(X;\Sp)\to \Shv(X\times I;\Sp)$
		is fully faithful.
		Since $D(*)$ is a dualizable category,
		the functor $ -\otimes D(*)$ preserves fully faithful functors.
		Thus the functor
		$f_X^*:\Shv(X;D(*)  )\to \Shv(X\times I;D(*) )$
		is also fully faithful.
		Since $f: I \to *$
		is $\Shv(-;\Sp)$-prim, 
		by Corollary \ref{Shv(-;D(*))to D(-) case},  
		we have the commutative diagram
		\begin{center}
			\begin{tikzcd}
				\Shv(*;D(*))\ar[r,"\sim"]\ar[d,"f^*"]
				&  D(*)\ar[d,"f^*"]\\
				\Shv( I;D(*))\ar[r]\ar[d,"f_{*}"]
				& D( I)\ar[d,"f_{*}"]\\
				\Shv(*;D(*))\ar[r,"\sim"]
				&  D(*).
			\end{tikzcd}
		\end{center}
		Thus we have
		$f_* 1_{D(I)}\simeq f_* f^* 1_{D(*)}
		\simeq  1_{D(*)}. $
		Since $f:I\to *$ is $\Shv(-;\Sp)$-prim,
		by Corollary \ref{preserves delta_f and D_f},
		we know that $f:I\to *$ is $D$-prim.
		Applying base change to the cartesian square
		\begin{center}
			\begin{tikzcd}
				X\times I \ar[r,"p_I"]\ar[d,"f_X"]
				& I \ar[d,"f"]\\
				X \ar[r,"p"] & *,
			\end{tikzcd}
		\end{center}
	 we get
		$$ f_{X*}1_{D(X\times I)}
		\simeq  f_{X*}p_{I}^* 1_{D(I)}
		\simeq p^* f_* 1_{D(I)} 
		\simeq  p^* 1_{D(*)}
		\simeq 1_{D(X)}. $$
		Note that as the pullback of $f:I\to *$,
		the map $f_X:X\times I \to X$ is also $D$-prim.
		Now, for any $E\in D(X)$,
		since $f_X:X\times I \to X$ is $D$-prim, 
		by the projection formula, we get
		$$
		f_{X*} f_X^* E
		\simeq f_{X*}( f_X^* E\otimes  1_{D(X\times I)} )
		\simeq  E\otimes f_{X*}1_{D(X\times I)}
		\simeq E\otimes 1_{D(X)}
		\simeq E.
		$$
		Thus the functor $f_X^*:D(X)\to D(X\times I)$
		is fully faithful.
		
	\end{proof}

	\begin{lem} \label{open and closed fiber seq}
Let $D\in 6\mathrm{FF}(\CondAn^\light,E)^\cont$.
		Let $X$ be a locally compact Hausdorff space,
		$i:Z\hookrightarrow X$ a closed subset and
		$j:U=X\setminus Z \hookrightarrow X$ the complement open subset. We have the fiber sequence
		$$
		j_! 1_{D(U)} \to 1_{D(X)} \to i_* 1_{D(Z)}
		$$
		in $D(X)$.
	\end{lem}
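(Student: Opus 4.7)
The plan is to transport the standard recollement cofiber sequence from the sheaf six functor formalism $\Shv(-;\Sp)$ over to $D$ via the morphism $\alpha:\Shv(-;\Sp)\to D(-)$ from Theorem \ref{initial}, and then identify the three terms using the compatibility properties of $\alpha$ established earlier.

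First, in $\Shv(X;\Sp)$ the open-closed decomposition associated with the complementary pair $j:U\hookrightarrow X$ and $i:Z\hookrightarrow X$ supplies the standard recollement fiber sequence of endofunctors $j_!j^*\to \mathrm{id}\to i_*i^*$. Evaluating at the unit $1_{\Shv(X;\Sp)}$ and using $j^*1_{\Shv(X;\Sp)}\simeq 1_{\Shv(U;\Sp)}$ together with $i^*1_{\Shv(X;\Sp)}\simeq 1_{\Shv(Z;\Sp)}$, one obtains the fiber sequence
$$j_!1_{\Shv(U;\Sp)}\to 1_{\Shv(X;\Sp)}\to i_*1_{\Shv(Z;\Sp)}$$
in $\Shv(X;\Sp)$. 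This is classical for locally compact Hausdorff spaces.

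Next, I apply $\alpha_X:\Shv(X;\Sp)\to D(X)$. By Corollary \ref{preserves colimits and unit}(1), $\alpha_X$ preserves all colimits; since both sides are stable $\infty$-categories, $\alpha_X$ preserves fiber sequences. Now I identify each resulting term. The middle term is $1_{D(X)}$ by Corollary \ref{preserves colimits and unit}(2). For the left term, the open inclusion $j$ lies in $E$, so Remark \ref{commutes with * & !}(2) gives $\alpha_X j_!\simeq j_!\alpha_U$, and applying Corollary \ref{preserves colimits and unit}(2) again yields $\alpha_X(j_!1_{\Shv(U;\Sp)})\simeq j_!1_{D(U)}$. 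For the right term, the closed inclusion $i:Z\hookrightarrow X$ is a proper map of locally compact Hausdorff spaces, and in any six functor formalism on LCH (in particular on $(\CondAn^\light,E)$) properness yields a canonical equivalence $i_!\simeq i_*$. Therefore
$$\alpha_X(i_*1_{\Shv(Z;\Sp)})\simeq \alpha_X(i_!1_{\Shv(Z;\Sp)})\simeq i_!\alpha_Z(1_{\Shv(Z;\Sp)})\simeq i_!1_{D(Z)}\simeq i_*1_{D(Z)},$$
again using Remark \ref{commutes with * & !}(2) and Corollary \ref{preserves colimits and unit}(2). Combining these identifications delivers the desired fiber sequence in $D(X)$.

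The main point of care will be ensuring that the closed inclusion $i$ genuinely sits in the class $E$ of $!$-able maps when $X$ is regarded as a light condensed anima, and that the resulting $i_!$ agrees with $i_*$ in both six functor formalisms via the structural properness equivalence. Both facts are standard consequences of the Heyer-Mann framework (closed inclusions of LCH spaces are proper, proper maps are $!$-able, and proper $f$ satisfies $f_!\simeq f_*$), but they should be cited cleanly from \cite{HM24}. Apart from this structural verification, every other ingredient is already in place from the compatibility of $\alpha$ with units, colimits, and lower-shriek functors.
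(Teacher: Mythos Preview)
Your argument is correct and takes a genuinely different route from the paper's.

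The paper argues internally in $D$: it first invokes Corollary \ref{preserves delta_f and D_f} to deduce that $j$ is $D$-\'etale and $i$ is $D$-proper (hence $i_*\simeq i_!$, $j^*\simeq j^!$ via \cite[4.6.4]{HM24}), then uses proper base change ($U\times_X Z=\emptyset$) to produce by adjunction a comparison map $u:j_!1_{D(U)}\to\fib(1_{D(X)}\to i_*1_{D(Z)})$, and finally checks that $u$ is an equivalence by applying $i^*$ and $j^*$, which are jointly conservative by condition (a) of Theorem \ref{initial}. Your approach instead transports the classical recollement triangle from $\Shv(X;\Sp)$ through $\alpha_X$, using exactness of $\alpha_X$ (Corollary \ref{preserves colimits and unit}) and its compatibility with $j_!$ and $i_!$ (Remark \ref{commutes with * & !}). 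This is arguably more in keeping with the philosophy of the paper and bypasses the direct appeal to stalk conservativity.

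One point to tighten: your assertion that ``in any six functor formalism on LCH properness yields $i_!\simeq i_*$'' is not literally a statement about an arbitrary $D$; what you need is that $i$ is $D$-proper in the Heyer--Mann sense. This does not come for free from \cite{HM24} alone but follows from Corollary \ref{preserves delta_f and D_f} of this paper (since $i$ is $\Shv(-;\Sp)$-prim with codualizing complex the unit, the same holds for $D$). Alternatively, and more directly, you could invoke Theorem \ref{suave prim case commutes}(2): since $i$ is $\Shv(-;\Sp)$-prim, one has $\alpha_X i_*\simeq i_*\alpha_Z$, which gives $\alpha_X(i_*1_{\Shv(Z;\Sp)})\simeq i_*1_{D(Z)}$ without ever passing through $i_!$ on the $D$ side. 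Either fix closes the argument cleanly.
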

\begin{proof}
	By Corollary \ref{preserves delta_f and D_f},
	since $j$ is $\Shv(-;\Sp)$-\'etale 
	and $i$ is $\Shv(-;\Sp)$-proper,
	we know that
$j$ is $D$-\'etale and $i$ is $D$-proper,
which by \cite[4.6.4]{HM24} shows that
 $i_*\simeq i_!$ and $j^*\simeq j^!.$
	By proper base change, 
	since $U\times_X Z=0$, 
	we have
	$j^* i_* \simeq 0$,
	hence
	$ j^*\fib( 1_{D(X)} \to i_* 1_{D(Z)} )
	\simeq 1_{D(U)} $,
	which gives the map
	$$u: j_! 1_{D(U)}\to \fib( 1_{D(X)} \to i_* 1_{D(Z)} ) $$
	via adjunction.
	Since $D$ satisfies condition (a) in Theorem \ref{initial},
	the functors $i^*$ and $j^*$ are jointly conservative.
	It is clear that $i^* u$ and $j^* u$ are equivalences.
		Thus the map 
		$ j_! 1_{D(U)}\to \fib( 1_{D(X)} \to i_* 1_{D(Z)} ) $
		is an equivalence.
\end{proof}

	\begin{lem} \label{f^! 1 and f_! 1}
	Let $D\in 6\mathrm{FF}(\CondAn^\light,E)^\cont$.
		If $f:\mathbb{R}\to *$
		is the projection,
		then 
		\begin{itemize}
			\item [(1)] 
			$f^! 1_{D(*)}\simeq  1_{D(\mathbb{R})}[1]$.
			\item [(2)]
			$f_! 1_{D(\mathbb{R})}\simeq 1_{D(*)}[-1]$.
		\end{itemize}
		
	\end{lem}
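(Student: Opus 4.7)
The plan is to deduce both formulas from the classical $\Shv(-;\Sp)$ case, transferring along the canonical morphism $\alpha:\Shv(-;\Sp)\to D$ supplied by Theorem \ref{initial}. For (2) I will in fact give a self-contained computation using the unit interval, which only invokes structure already assembled in the paper; for (1) I will rely on the classical suavity of $\mathbb{R}$ in the sheaf-theoretic six functor formalism and transport the dualizing complex along $\alpha$.

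For (2), let $g:[0,1]\to *$ denote the projection, $i:\{0,1\}\hookrightarrow[0,1]$ the closed embedding, and $j:(0,1)\hookrightarrow[0,1]$ the open complement. Applying Lemma \ref{open and closed fiber seq} to the locally compact Hausdorff space $[0,1]$ yields a fiber sequence
\begin{equation*}
j_!1_{D((0,1))}\to 1_{D([0,1])}\to i_*1_{D(\{0,1\})}
\end{equation*}
in $D([0,1])$. I then push this forward along $g_!$ and identify each term. Using that $g$ is $\Shv(-;\Sp)$-proper and hence $D$-proper by Corollary \ref{preserves delta_f and D_f}, we have $g_!\simeq g_*$, and the middle term becomes $g_*1_{D([0,1])}\simeq 1_{D(*)}$ by Proposition \ref{f_X^* fully faithful} applied with $X=*$. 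Since $(0,1)\cong\mathbb{R}$ in $\CondAn^\light$, the left term becomes $f_!1_{D(\mathbb{R})}$, and since $i$ is proper the right term becomes $(g\circ i)_!1\simeq 1_{D(*)}\oplus 1_{D(*)}$. This produces the fiber sequence
\begin{equation*}
f_!1_{D(\mathbb{R})}\longrightarrow 1_{D(*)}\longrightarrow 1_{D(*)}\oplus 1_{D(*)}
\end{equation*}
in $D(*)$. The connecting map originates from the unit $1_{D([0,1])}\to i_*i^*1_{D([0,1])}$; by naturality, restricting along each of the two points $\iota_k:\{k\}\hookrightarrow[0,1]$ produces the identity of $1_{D(*)}$, so after the decomposition $\{0,1\}=\{0\}\sqcup\{1\}$ the map is the diagonal $\Delta:1\to 1\oplus 1$. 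Since the diagonal is split mono in a stable $\infty$-category with cofiber $1_{D(*)}$, rotating the fiber sequence gives $f_!1_{D(\mathbb{R})}\simeq\Omega 1_{D(*)}\simeq 1_{D(*)}[-1]$, proving (2).

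For (1), I would use that $f:\mathbb{R}\to *$ is $\Shv(-;\Sp)$-suave with dualizing complex $\omega_f^{\Shv(-;\Sp)}\simeq 1_{\Shv(\mathbb{R};\Sp)}[1]$; this is the standard sheaf-theoretic calculation for the dualizing complex of $\mathbb{R}$ (available for any topological manifold in the framework of Heyer--Mann). Corollary \ref{preserves delta_f and D_f} applied to the morphism $\alpha:\Shv(-;\Sp)\to D$ of Theorem \ref{initial} then shows that $f$ is $D$-suave with $\omega_f^D\simeq\alpha_{\mathbb{R}}(\omega_f^{\Shv(-;\Sp)})$. Corollary \ref{preserves colimits and unit} tells us that $\alpha_{\mathbb{R}}$ is colimit-preserving and sends the unit to the unit, so in particular it commutes with the shift, which gives $\omega_f^D\simeq\alpha_{\mathbb{R}}(1_{\Shv(\mathbb{R};\Sp)}[1])\simeq 1_{D(\mathbb{R})}[1]$. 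Since $f^!1_{D(*)}\simeq\omega_f^D$, this yields (1).

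The main obstacle is the identification of the connecting map in (2) as the diagonal: although intuitively clear, it requires a careful use of the naturality of the unit $\mathrm{id}\to i_*i^*$ together with the compatibility of $g_*$ with the splitting $i_*1_{\{0,1\}}\simeq\iota_{0*}1\oplus\iota_{1*}1$. Once this is done, the rest is a direct assembly of the results already developed, together with the single classical input $\omega_f^{\Shv(-;\Sp)}\simeq 1[1]$ used for (1).
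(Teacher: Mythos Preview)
Your proof is correct and follows essentially the same route as the paper: part (1) transports the classical dualizing complex $1[1]$ along $\alpha$ via Corollary~\ref{preserves delta_f and D_f} and Corollary~\ref{preserves colimits and unit}, and part (2) uses the open--closed fiber sequence on $[0,1]$ pushed down to a point. You are in fact more careful than the paper on one point: the paper simply writes $\fib(1_{D(*)}\to 1_{D(*)}\oplus 1_{D(*)})\simeq 1_{D(*)}[-1]$ without identifying the map, whereas you correctly flag that one must check the connecting map is the diagonal.
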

	\begin{proof}
		For (1), consider the functor
		$\alpha_\mathbb{R}: \Shv(\mathbb{R};\Sp)
		\to  D(\mathbb{R})$.
		Since $f:\mathbb{R}\to *$
		is $\Shv(-;\Sp)$-suave,
		by Corollary \ref{preserves delta_f and D_f},
		the functor $\alpha_\mathbb{R}$
		sends $f^!1_{\Shv(\mathbb{R};\Sp)}$
		to $f^! 1_{D(*)}$.
		By \cite[4.8.9]{HM24}, we have
		 $f^! 1_{\Shv(\mathbb{R};\Sp)}
		\simeq 1_{\Shv(\mathbb{R};\Sp)}[1]$.
		Since the functor $\alpha_\mathbb{R}$ preserves colimits, we get
		$$
		f^! 1_{D(*)}
		\simeq \alpha_{\mathbb{R}}( f^!1_{\Shv(\mathbb{R};\Sp)} )
		\simeq \alpha_{\mathbb{R}}(1_{\Shv(\mathbb{R};\Sp)}[1] )
		\simeq \alpha_{\mathbb{R}}(1_{\Shv(\mathbb{R};\Sp)})[1] 
		\simeq 1_{D(\mathbb{R})}[1].
		$$
		For (2), condider the diagram
		\begin{center}
			\begin{tikzcd}
				\mathbb{R}\simeq(0,1)\ar[r,hook,"j"]\ar[rd,"f"swap]
				& I \ar[d,"p"] & \{0,1\}\ar[ld]\ar[l,hook',"i"swap] \\
				& * .&
			\end{tikzcd}
		\end{center}
		By Proposition \ref{f_X^* fully faithful},
		we have $p_* 1_{D(I)}\simeq 1_{D(*)}$,
		thus we get
		\begin{align*}
			f_! 1_{D(\mathbb{R})}
			&\simeq p_! j_! 1_{D(\mathbb{R})}
			\simeq p_* j_! 1_{D(\mathbb{R})}
			\simeq \mathrm{fib}
			( p_* 1_{D(I)}\to p_* i_* 1_{ D(\{0,1\}) } )\\
			&\simeq
			\fib( 1_{D(*)}\to 1_{D(*)} \oplus 1_{D(*)}  )
			\simeq 1_{D(*)}[-1],
		\end{align*}
	where the third equivalence is by Lemma 
	\ref{open and closed fiber seq}.	
	\end{proof}

	\begin{thm}
	Let $D\in 6\mathrm{FF}(\CondAn^\light,E)^\cont$.
		For any light condensed anima $X$
		with projection $f_X:X\times \mathbb{R} \to X$,
		the pullback functor
		$ f_X^*:D(X)\to D(X\times \mathbb{R}) $
		is fully faithful.
	\end{thm}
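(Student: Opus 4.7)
The plan is to mirror the proof of Proposition~\ref{f_X^* fully faithful}, replacing the \emph{primness} of $f\colon I\to*$ with the \emph{suavity} of $f\colon\mathbb{R}\to*$. Since $\mathbb{R}$ is a smooth real manifold, $f$ is $\Shv(-;\Sp)$-suave by \cite[Section~4.8]{HM24}; hence by Corollary~\ref{preserves delta_f and D_f}(1) it is $D$-suave, and $f_X\colon X\times\mathbb{R}\to X$ is $D$-suave by stability of suavity under base change. Applying suave base change together with Lemma~\ref{f^! 1 and f_! 1}(1) yields $\omega_{f_X}\simeq 1_{D(X\times\mathbb{R})}[1]$, a tensor-invertible object, so the suavity identification becomes $f_X^!\simeq\omega_{f_X}\otimes f_X^*(-)\simeq f_X^*(-)[1]$. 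Since $[1]$ is an auto-equivalence of $D(X\times\mathbb{R})$, the functor $f_X^*$ is fully faithful if and only if $f_X^!$ is, equivalently the counit $c\colon f_{X!}f_X^!\to\mathrm{id}_{D(X)}$ is an equivalence.

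Using the suavity identification and the projection formula for $f_{X!}$, the counit is naturally identified as $c_E\simeq t\otimes\mathrm{id}_E$, where $t\colon f_{X!}\omega_{f_X}\to 1_{D(X)}$ denotes the trace $c_{1_{D(X)}}$. It therefore suffices to prove that $t$ is an equivalence. By Lemma~\ref{f^! 1 and f_! 1}(2) and $!$-base change we have $f_{X!}\omega_{f_X}\simeq (f_{X!}1_{D(X\times\mathbb{R})})[1]\simeq 1_{D(X)}[-1][1]\simeq 1_{D(X)}$, so source and target of $t$ are abstractly equivalent; the task is to show that $t$ itself realizes this equivalence.

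To achieve this, I transfer the statement from the sheaf side via $\epsilon\colon\Shv(-;D(*))\to D(-)$. On $\Shv(-;D(*))$ the pullback $f_X^*$ is fully faithful, because $f_X^*\colon\Shv(X;\Sp)\to\Shv(X\times\mathbb{R};\Sp)$ is fully faithful by $\mathbb{R}$-invariance \cite[Section~4.8]{HM24} and $-\otimes D(*)$ preserves fully faithful functors (as $D(*)$ is dualizable); consequently the corresponding trace $t^{\Shv}$ is an equivalence. Since $\epsilon$ commutes with $f_!$ by Remark~\ref{commutes with * & !}(2) and, thanks to the $\Shv(-;\Sp)$-suavity of $f_X$, also with $f^!$ by Corollary~\ref{Shv(-;D(*))to D(-) case}(1), $\epsilon$ matches $t^{\Shv}$ with $t$; hence $t$ is an equivalence and $f_X^*$ is fully faithful. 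The main obstacle is the naturality identification $c_E\simeq t\otimes\mathrm{id}_E$ in the second paragraph, which boils down to the compatibility of the suavity datum $f_X^!\simeq\omega_{f_X}\otimes f_X^*$ with the counit of $f_{X!}\dashv f_X^!$—a standard but nontrivial 2-categorical compatibility in the formalism of \cite{HM24}, best handled through the kernel category $K_{D,X}$ as in the proof of Theorem~\ref{suave prim case commutes}.
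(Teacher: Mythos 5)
Your proposal is correct in substance and shares the paper's key computational inputs (suavity of $\mathbb{R}\to *$, hence $D$-suavity of $f_X$ by base change, $\omega_{f_X}\simeq 1_{D(X\times\mathbb{R})}[1]$ and $f_{X!}1_{D(X\times\mathbb{R})}\simeq 1_{D(X)}[-1]$ via Lemma \ref{f^! 1 and f_! 1} and base change), but your concluding step is genuinely different. The paper finishes with a short computation entirely inside $D$: for $E\in D(X)$,
$f_{X*}f_X^*E\simeq f_{X*}f_X^!E[-1]\simeq f_{X*}\underline{\Hom}(1,f_X^!E)[-1]\simeq\underline{\Hom}(f_{X!}1,E)[-1]\simeq\underline{\Hom}(1_{D(X)}[-1],E)[-1]\simeq E$,
exactly as in Zhu's Proposition 4.28, with no further input needed. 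You instead reduce fully faithfulness to the counit of $f_{X!}\dashv f_X^!$, identify it as $t\otimes\mathrm{id}_E$ with $t$ the trace, and transfer $t$ from $\Shv(-;D(*))$ along $\epsilon$ using Remark \ref{commutes with * & !} and Corollary \ref{Shv(-;D(*))to D(-) case}. This buys you something the paper glosses over: the direct computation only exhibits an abstract equivalence $f_{X*}f_X^*E\simeq E$, whereas you track that the actual unit/counit is inverted, handling the compatibility $c_E\simeq t\otimes\mathrm{id}_E$ through the kernel category as in Theorem \ref{suave prim case commutes}. The costs are twofold: that compatibility is asserted rather than proved (it is standard in the formalism of Heyer--Mann, but it is the real content of your second paragraph), and you need $\mathbb{R}$-invariance of $\Shv(-;\Sp)$ over an arbitrary light condensed anima $X$ as an external input; the paper only cites the $[0,1]$-statement (Proposition \ref{f_X^* fully faithful} via HM24, 4.8.7) and deliberately derives the $\mathbb{R}$-case internally, so if the $\mathbb{R}$-statement is not available in the cited form you would have to prove it by exactly the paper's direct computation applied to $\Shv(-;D(*))$ --- at which point the $\epsilon$-transfer becomes superfluous. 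So your argument works, but it is a more roundabout route whose extra machinery is only needed if one insists on the finer unit-counit bookkeeping.
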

	\begin{proof}
The proof here is the same as the proof in \cite[4.28]{Zhu25}.		
Consider the square
		\begin{center}
			\begin{tikzcd}
				X\times \mathbb{R}\ar[r,"p_\mathbb{R}"]\ar[d,"f_X"]
				& \mathbb{R}\ar[d,"f"]\\
				X \ar[r,"p"]
				& *.
			\end{tikzcd}
		\end{center}
		By Lemma \ref{f^! 1 and f_! 1},
		we have $f_! 1_{D(\mathbb{R})}\simeq 1_{D(*)}[-1]$,
		thus by base change, we have
		$$ f_{X!} 1_{D(X\times \mathbb{R})}
		\simeq f_{X!} p_\mathbb{R}^* 1_{D(\mathbb{R})}
		\simeq p^* f_! 1_{D(\mathbb{R})} 
		\simeq  p^* ( 1_{D(*)}[-1])
		\simeq 1_{D(X)}[-1] .$$
		The map $f:\mathbb{R}\to *$ is $\Shv(-;\Sp)$-suave,
		and hence is $D$-suave, we get
		$$
		f_X^! 1_{D(X)}
		\simeq f_X^! p^* 1_{D(*)}
		\simeq  p_\mathbb{R}^* f^! 1_{D(*)}
		\simeq p_\mathbb{R}^* (1_{D(\mathbb{R})}[1])
		\simeq 1_{D(X\times \mathbb{R})}[1],
		$$
		where the third equivalence is by Lemma \ref{f^! 1 and f_! 1}.
		Thus we have
		$$f_X^!(-) \simeq  f_X^! 1_{D(X)}\otimes f_X^*(-) 
		\simeq 1_{D(X\times \mathbb{R})}[1]\otimes f_X^*(-)
		\simeq f_X^*(-)[1].$$
		Now, for any $E\in D(X)$, we have
		\begin{align*}
			f_{X*} f_X^* E
			&\simeq  f_{X*}  f_X^! E [-1]
			\simeq f_{X*} \underline{\Hom}(1_{D(X\times \mathbb{R})},
			f_X^! E)[-1]\\
			&\simeq \underline{\Hom}(f_{X!}1_{D(X\times \mathbb{R})},E )[-1]
			\simeq \underline{\Hom}(1_{D(X)}[-1],E)[-1]
			\simeq E.
		\end{align*}
		Thus  $ f_X^*:D(X)\to D(X\times \mathbb{R}) $
		is fully faithful.
	\end{proof}

	\small
	\bibliographystyle{amsalpha}
	%\clearpage
	%\phantomsection
	%\addcontentsline{toc}{section}{References}
	\bibliography{universal} 
	
\end{document}